\documentclass[12pt]{amsart}
\addtolength{\oddsidemargin}{-.5in}\addtolength{\topmargin}{-.4in}
\addtolength{\evensidemargin}{-.5in}
\addtolength{\textwidth}{1.0in} \addtolength{\textheight}{1.4in}
\usepackage{amsmath,amssymb}

\theoremstyle{plain}
\newtheorem{thm}{Theorem}[section]
\newtheorem{theorem}[thm]{Theorem}

\newtheorem{lemma}[thm]{Lemma}

\newtheorem{proposition}[thm]{Proposition}
\theoremstyle{definition}

\newtheorem{remark}[thm]{Remark}

\newtheorem{definition}[thm]{Definition}

\newtheorem{examples}[thm]{Examples}
\newtheorem{conjecture}[thm]{Conjecture}

\numberwithin{equation}{section}

\newcommand{\sC}{{\mathcal C}}

\newcommand{\sK}{{\mathcal K}}

\newcommand{\sU}{{\mathcal U}}


\newcommand{\C}{{\mathbb C}}

\newcommand{\BP}{{\mathbb P}}
\newcommand{\pit}{{\mathbb P}}
\newcommand{\p}{{\mathbb P}}
\newcommand{\Q}{{\mathbb Q}}

\newcommand{\BS}{{\mathbb S}}


\newcommand{\fg}{{\mathfrak g}}

\newcommand{\aut}{{\mathfrak a}{\mathfrak u}{\mathfrak t}}

\newcommand\sd{\!>\!\!\!\! \lhd \:}

\def\Gr{\mathop{\rm Gr}\nolimits}

\def\Lag{\mathop{\rm Lag}\nolimits}
\def\Sym{\mathop{\rm Sym}\nolimits}

\def\Hom{\mathop{\rm Hom}\nolimits}

\def\Ker{\mathop{\rm Ker}\nolimits}

\def\SYM{\mathop{\rm Sym}\nolimits}

\title{On Fano manifolds of Picard number one with big automorphism groups}
\author{Baohua Fu, Wenhao Ou and Junyi Xie}

\begin{document}

\begin{abstract}
Let $X$ be an $n$-dimensional smooth Fano complex variety  of Picard number one. Assume that the VMRT at a general point of $X$ is smooth irreducible and non-degenerate (which holds if $X$ is covered by lines with index $ >(n+2)/2$).  It is proven that $\dim \aut(X) > n(n+1)/2$ if and only if $X$ is isomorphic to  $\BP^n, \Q^n$ or ${\rm Gr}(2,5)$. Furthermore, the equality $\dim \aut(X) = n(n+1)/2$ holds only when $X$ is isomorphic to the 6-dimensional Lagrangian Grassmannian ${\rm Lag}(6)$ or a general hyperplane section of ${\rm Gr}(2,5)$.
\end{abstract}

\maketitle

\section{Introduction}

For a smooth projective complex variety $X$, the  Lie algebra $\aut(X)$ of its automorphism group ${\rm Aut}(X)$ is naturally identified with $H^0(X, T_X)$.
A natural question is how big can this group be. In general, $\aut(X)$ can be very big with respect to its dimension. For example, when $X$ is the Hirzebruch surface $\mathbb{F}_m$, then $\dim \aut(X)=m+5$. On the other hand, in the case of Fano manifold of Picard number one, we have the following:
\begin{conjecture}[\cite{HM}, Conjecture 2]
Let $X$ be an $n$-dimensional Fano manifold of Picard number one. Then $\dim \aut(X) \leq n^2+2n$, with equality if and only if $X \simeq \BP^n$.
\end{conjecture}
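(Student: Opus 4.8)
The plan is to prove the conditional classification underlying Conjecture~1.1: assuming, as in the statement, that the VMRT $\sC_x\subset\BP(T_xX)$ at a general point $x\in X$ is smooth, irreducible and non-degenerate, to determine all $X$ with $\dim\aut(X)\geq n(n+1)/2$; this in particular yields $\dim\aut(X)\leq n^2+2n$ with equality only for $\BP^n$. Write $\widehat{\sC}_x\subset T_xX$ for the affine cone over $\sC_x$. The first move is to reduce everything to a projective invariant of the single variety $\sC_x$, via the Hwang--Mok filtration of $\aut(X)=H^0(X,T_X)$ by order of vanishing at $x$. Its associated graded is $\aut(X)_{-1}\oplus\aut(X)_0\oplus\aut(X)_{\geq 1}$: evaluation at $x$ embeds $\aut(X)_{-1}$ into $T_xX$, so $\dim\aut(X)_{-1}\leq n$; the isotropy action embeds $\aut(X)_0$ into $\aut(\widehat{\sC}_x)$, the Lie algebra of linear vector fields tangent to the cone; and, by the prolongation theorem for VMRT-structures on Fano manifolds of Picard number one, $\aut(X)_{\geq 1}$ embeds into the prolongation tower $\bigoplus_{k\geq 1}\aut(\widehat{\sC}_x)^{(k)}$. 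Hence
\[
\dim\aut(X)\ \leq\ n\ +\ \dim\aut(\widehat{\sC}_x)\ +\ \sum_{k\geq 1}\dim\aut(\widehat{\sC}_x)^{(k)},
\]
and the whole problem now lives on $\sC_x$. The argument then splits according to whether the first prolongation $\aut(\widehat{\sC}_x)^{(1)}$ vanishes; note that $\aut(\widehat{\sC}_x)^{(1)}=0$ forces $\aut(\widehat{\sC}_x)^{(k)}=0$ for all $k\geq 1$.

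Suppose first $\aut(\widehat{\sC}_x)^{(1)}\neq 0$. If $\sC_x=\BP(T_xX)$ then $X\simeq\BP^n$ by Cho--Miyaoka--Shepherd-Barron (or Kebekus), with $\dim\aut(X)=n^2+2n$. Otherwise $\sC_x$ is proper, smooth, irreducible and non-degenerate with non-zero prolongation, and by the Fu--Hwang classification of such varieties it is projectively equivalent to one of a short explicit list: a smooth quadric, the VMRT of an irreducible Hermitian symmetric space in its minimal embedding, or one of finitely many non-homogeneous examples of scroll type (including the cubic scroll $\mathbb{F}_1\subset\BP^4$). For each of these the pair $(\aut(\widehat{\sC}_x),T_xX)$ and all its prolongations are explicit, the higher prolongations $\aut(\widehat{\sC}_x)^{(k)}$ ($k\geq 2$) vanishing; moreover the Cartan--Fubini type extension theorems (Hwang--Mok, Mok, Hwang--Hong, Hwang--Li, and the target-rigidity part of Fu--Hwang) force $X$ to be the associated homogeneous model, or the relevant smooth hyperplane section of it. Running through this finite list and computing $\dim\aut$ --- $\dim\aut(\Q^n)=\binom{n+2}{2}$, $\dim\aut(\Gr(2,5))=24$, $\dim\aut(\Lag(6))=21=\binom{7}{2}$, and $\dim\aut$ of a general hyperplane section of $\Gr(2,5)$ equal to $15=\binom{6}{2}$, while every other model lies strictly below $n(n+1)/2$ --- produces exactly the claimed answer.

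Suppose now $\aut(\widehat{\sC}_x)^{(1)}=0$. Then the displayed inequality reduces to $\dim\aut(X)\leq n+\dim\aut(\widehat{\sC}_x)$, and it suffices to prove the strict bound $\dim\aut(\widehat{\sC}_x)<n(n-1)/2$ for every proper, smooth, irreducible, non-degenerate $\sC_x\subset\BP(T_xX)$ with vanishing prolongation. I would attack this by induction on $p=\dim\sC_x$. Setting $G=\Aut(\widehat{\sC}_x)^{\circ}$, for a general $c\in\sC_x$ one has $\dim G\leq p+\dim G_c$; the stabilizer $G_c$ acts linearly on $T_c\sC_x$, on the normal space to $\sC_x$ at $c$, and on the projective second fundamental form of $\sC_x$ at $c$, and (using the recursive structure of $\sC_x$ --- for instance that it is covered by lines, so that its own VMRT is available) also on a proper non-degenerate variety inside a strictly smaller projective space, to which the inductive hypothesis applies. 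Adding the orbit contribution to the bounds extracted from these auxiliary actions should give $\dim\aut(\widehat{\sC}_x)<n(n-1)/2$, the base cases being rational normal curves (where $\dim\aut(\widehat{\sC}_x)=4$) together with finitely many surfaces and threefolds treated directly. Hence $\dim\aut(X)<n(n+1)/2$ in this case, so it contributes nothing, and the two cases together give the theorem.

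The main obstacle is precisely the strict estimate of the third step. The subtlety is that $\aut(\widehat{\sC}_x)$ can still be moderately large --- for instance when $\sC_x$ is an almost-homogeneous scroll --- while, conversely, a naive orbit--stabilizer bound charging $G_c$ the full general linear algebra of the normal space is far too lossy (a rational normal curve in a large projective space already shows this). So the induction must genuinely exploit that $\widehat{\sC}_x$ spans $T_xX$: one has to control how much of $\mathfrak{gl}(T_xX)$ is truly consumed by preserving the embedded variety $\sC_x\subset\BP(T_xX)$, and it is here that smoothness and non-degeneracy enter in an essential way. The delicate point is then to identify and bound the finitely many low-dimensional exceptional $\sC_x$ --- smooth non-degenerate surfaces and threefolds with a large group of linear automorphisms but vanishing prolongation --- and to check that for none of them does $\dim\aut(\widehat{\sC}_x)$ reach $n(n-1)/2$; this, together with setting up the recursion (in particular verifying that the relevant $\sC_x$ are covered by lines), is where I expect the bulk of the work to lie.
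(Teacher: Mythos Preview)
First, note that the statement you were given is labelled a \emph{Conjecture} in the paper and is not proved there unconditionally; what the paper actually proves is the conditional Theorem~\ref{t.main} (under the standing hypothesis that the VMRT at a general point is smooth, irreducible and non-degenerate), and your proposal is really an attempt at that theorem. With that understood, your overall architecture---the reduction $\dim\aut(X)\leq n+\dim\aut(\widehat{\sC}_x)+\dim\aut(\widehat{\sC}_x)^{(1)}$ (the second prolongation already vanishes for smooth non-degenerate $\sC_x$, so no higher terms are needed), followed by the dichotomy on whether $\aut(\widehat{\sC}_x)^{(1)}$ vanishes---coincides with the paper's. In the case $\aut(\widehat{\sC}_x)^{(1)}\neq 0$ your plan (Fu--Hwang classification, a case-by-case bound, then Cartan--Fubini/Mok to recover $X$) is exactly the content of Proposition~\ref{p.main} and the final proof of Theorem~\ref{t.main}.

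The real divergence, and the genuine gap, is the case $\aut(\widehat{\sC}_x)^{(1)}=0$. You propose an induction on $\dim\sC_x$ via the VMRT of $\sC_x$ itself, and you rightly flag this as the hard part. The paper takes a different and much more robust route: it proves a general bound on the linear automorphism group of \emph{any} irreducible non-degenerate embedded variety that is not a cone (Theorem~\ref{thm:auto}), namely $\dim G_{n-1}^{\sC_x}\leq\frac{(n-1)n}{2}$, with a further drop to $\frac{(n-1)n}{2}-3$ when $\sC_x$ is smooth and not a quadric hypersurface. The induction is on the ambient dimension via a \emph{general hyperplane section}, not via a VMRT: a Bertini-type lemma (Lemma~\ref{lem:nondegenerate:cutout}) guarantees that $\sC_x\cap H$ is again non-degenerate and not a cone, and the key step is that the kernel of the restriction $G_n^{\sC_x}\cap G_n^H\to G_{n-1}^{\sC_x\cap H}$ is \emph{finite} (Lemmas~\ref{lem:restriction:kernel:eigenvalue:1:identity}--\ref{lem:restriction:kernel:finite:set}), costing exactly the ambient dimension at each step and terminating at a non-degenerate curve. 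Your proposed recursion faces a structural obstacle: an arbitrary smooth non-degenerate $\sC_x\subset\BP^{n-1}$ need not be covered by lines (think of a rational normal curve, or any variety embedded by a sufficiently positive linear system), so there may be no VMRT of $\sC_x$ to recurse on, and the orbit--stabilizer scheme you sketch does not get started in general. The hyperplane-section argument sidesteps this entirely and requires no smoothness until the very last sharpening.
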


In \cite{HM} (Theorem 1.3.2), this conjecture is proven under the assumption that the variety of minimal rational tangents (VMRT for short, cf. Definition \ref{d.VMRT}) at a general point of $X$ is smooth irreducible non-degenerate and linearly normal. The purpose of this note is to push further the ideas of \cite{HM}, combined with the recent results in \cite{FH1} and \cite{FH2}, to prove the following

\begin{theorem} \label{t.main}
Let $X$ be an $n$-dimensional Fano manifold of Picard number one. Assume that the VMRT at a general point of $X$ is smooth irreducible and  non-degenerate.
Then we have
\begin{itemize}
\item[(a)]  $\dim \aut(X) > n(n+1)/2$ if and only if $X$ is isomorphic to $\BP^n, \Q^n$ or ${\rm Gr}(2,5)$.
\item[(b)] The equality $\dim \aut(X) = n(n+1)/2$ holds only when $X$ is isomorphic to ${\rm Lag}(6)$ or a general hyperplane section of ${\rm Gr}(2,5)$.
\end{itemize}
\end{theorem}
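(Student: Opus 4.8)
The plan is to reduce everything to the infinitesimal geometry of the VMRT. Let $x \in X$ be a general point and let $\mathcal{C}_x \subset \mathbb{P}(T_xX) = \mathbb{P}^{n-1}$ be the VMRT at $x$, which by hypothesis is a smooth irreducible non-degenerate projective variety. The starting point is the fundamental exact sequence relating $\aut(X)$ to the isotropy representation and the VMRT: the evaluation map $\aut(X) \to T_xX$ is surjective when $X$ is Fano of Picard number one (homogeneity-type argument, or the fact that the VMRT argument forces transitivity), so $\dim\aut(X) = n + \dim \aut(X)_x$, where $\aut(X)_x$ is the isotropy subalgebra at $x$. Then one uses the differentiation map $\aut(X)_x \hookrightarrow \mathfrak{gl}(T_xX)$ together with the fact, established by Hwang--Mok and refined in \cite{FH1}, \cite{FH2}, that the image lands inside the Lie algebra $\mathfrak{aut}(\hat{\mathcal{C}}_x) \subset \mathfrak{gl}(T_xX)$ of infinitesimal linear automorphisms preserving the cone over the VMRT. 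This reduces bounding $\dim\aut(X)$ to bounding $\dim \mathfrak{aut}(\hat{\mathcal{C}}_x)$; more precisely one gets a bound $\dim\aut(X) \le n + \dim\mathfrak{aut}(\hat{\mathcal{C}}_x) + (\text{contribution from prolongations/second fundamental form})$, but in the critical range the prolongation contributions are controlled by the results cited.

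Next I would run the numerical dichotomy. Writing $d = \dim \mathcal{C}_x$ and using that $\mathcal{C}_x$ is non-degenerate in $\mathbb{P}^{n-1}$, one obtains structural constraints. The key comparison is with $\mathbb{P}^n$ ($\mathcal{C}_x = \mathbb{P}^{n-1}$, $\mathfrak{aut}(\hat{\mathcal{C}}_x) = \mathfrak{gl}_n$, total $n^2 + 2n$), $\mathbb{Q}^n$ ($\mathcal{C}_x$ a smooth quadric of dimension $n-2$, giving $\dim\aut(\mathbb{Q}^n) = \binom{n+2}{2}$, which is $> n(n+1)/2$), and $\mathrm{Gr}(2,5)$ ($n = 6$, $\mathcal{C}_x = \mathbb{P}^1 \times \mathbb{P}^2$ Segre, $\dim\aut = 24 > 21$). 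For part (a) the claim becomes: if $\mathcal{C}_x$ is not one of $\mathbb{P}^{n-1}$, a smooth quadric, or $\mathbb{P}^1 \times \mathbb{P}^2 \subset \mathbb{P}^5$, then $n + \dim\mathfrak{aut}(\hat{\mathcal{C}}_x) + (\text{prolongation}) \le n(n+1)/2$. Here is where \cite{FH1} and \cite{FH2} enter decisively: their classification of VMRT-structures and the dimension estimates for $\mathfrak{aut}$ of non-degenerate smooth varieties (including the case where $\mathcal{C}_x$ is linearly normal versus the possibly non-linearly-normal case, which is precisely the extra generality over \cite{HM}) give exactly the needed inequality outside a short explicit list. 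One has to check the list: besides the three exceptional cases of (a), the only borderline cases where equality $n(n+1)/2$ can be attained should be $\mathrm{Lag}(6)$ (whose VMRT at a point is $v_2(\mathbb{P}^2) \subset \mathbb{P}^5$, $n = 6$, $\dim\aut(\mathrm{Lag}(6)) = 21 = \binom{7}{2}$) and the general hyperplane section of $\mathrm{Gr}(2,5)$ (a $5$-fold with $\dim\aut = 15 = \binom{6}{2}$, whose VMRT is a hyperplane section of $\mathbb{P}^1\times\mathbb{P}^2$).

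For part (b) I would argue by the same mechanism but now tracking the equality case carefully: $\dim\aut(X) = n(n+1)/2$ forces $\mathcal{C}_x$ to be one of the finitely many smooth non-degenerate varieties whose linear automorphism Lie algebra (plus prolongation) hits the bound exactly, and then a Cartan--Fubini / recognition-theorem step (available since the VMRT is smooth irreducible non-degenerate) identifies $X$ from its VMRT. The candidates surviving the numerical sieve are shown to be only $\mathrm{Lag}(6)$ and the hyperplane section of $\mathrm{Gr}(2,5)$; one verifies these do attain equality and, via the recognition theorem for VMRT (Hwang--Mok, and the extensions in \cite{FH1}), that no other $X$ can have the same VMRT. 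The word ``only'' in the statement reflects that one direction—that these two are the unique possibilities—is what gets proved, while attainment is by direct computation.

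The main obstacle I anticipate is the passage from a bound on $\dim\mathfrak{aut}(\hat{\mathcal{C}}_x)$ to a bound on $\dim\aut(X)$ when the VMRT is \emph{not} linearly normal: in that situation the isotropy representation need not be completely reducible in the convenient way used in \cite{HM}, and the prolongation $\mathfrak{aut}(\hat{\mathcal{C}}_x)^{(1)}$ (which a priori contributes to $\dim\aut(X)_x$ beyond $\dim\mathfrak{aut}(\hat{\mathcal{C}}_x)$) must be shown to vanish or be tightly controlled except in the few cases $\mathbb{P}^n$, $\mathbb{Q}^n$, $\mathrm{Gr}(2,5)$, $\mathrm{Lag}(6)$ and the hyperplane section of $\mathrm{Gr}(2,5)$—this is exactly the content one must extract from \cite{FH1} and \cite{FH2}. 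A secondary difficulty is handling low-dimensional edge cases (small $n$, or $\mathcal{C}_x$ of small codimension such as a hypersurface in $\mathbb{P}^{n-1}$) separately, since the general estimates typically degrade there and one needs ad hoc arguments, together with the classification of Fano manifolds of Picard number one whose VMRT is a smooth hypersurface.
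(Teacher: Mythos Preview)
Your overall architecture is close to the paper's, but there are two genuine problems.

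First, the claim that the evaluation map $\aut(X)\to T_xX$ is surjective for a Fano manifold of Picard number one is false in general (a general smooth cubic threefold already has finite automorphism group). The paper never uses this; instead it invokes the inequality
\[
\dim\aut(X)\ \le\ n + \dim\aut(\hat{\mathcal C}_x) + \dim\aut(\hat{\mathcal C}_x)^{(1)},
\]
with equality exactly when the VMRT structure is locally flat (Proposition~\ref{p.prolong}). This is what makes the equality analysis in part (b) work: equality forces local flatness, and only then does Cartan--Fubini identify $X$ from its VMRT. Your sketch jumps to Cartan--Fubini without securing local flatness.

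Second, and more seriously, you are missing the key new ingredient of the paper. The references \cite{FH1}, \cite{FH2} only classify those smooth non-degenerate $S\subset\mathbb P^{n-1}$ with $\aut(\hat S)^{(1)}\neq 0$; they do \emph{not} provide a general dimension bound on $\aut(\hat S)$. When $\aut(\hat{\mathcal C}_x)^{(1)}=0$ one still needs the a priori estimate $\dim\aut(\hat{\mathcal C}_x)\le \tfrac{n(n-1)}{2}$ (indeed $\le \tfrac{n(n-1)}{2}-2$ once one knows $\mathcal C_x$ is smooth and not a quadric) to get $\dim\aut(X)<\tfrac{n(n+1)}{2}$. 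That estimate is precisely Theorem~\ref{thm:auto}, proved in the paper by an inductive hyperplane-slicing argument (Lemmas~\ref{lem:nondegenerate:cutout}--\ref{lem:restriction:kernel:finite:set}); it is independent of any VMRT theory and does not appear in \cite{FH1}, \cite{FH2}, or \cite{HM}. Without it your dichotomy collapses in the (generic) case of vanishing prolongation, and the ``secondary difficulty'' you flag about hypersurface VMRTs is in fact handled by this same theorem (smooth non-quadric hypersurfaces have $\dim G_n^X=0$), not by any classification of the ambient Fano $X$.
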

\begin{remark}
As proved in Corollary 1.3.3 \cite{HM}, the assumption on the VMRT of $X$ is satisfied if there exists an embedding $X \subset \BP^N$ such that $X$ is covered by lines with index $ > \frac{n+2}{2}$.
\end{remark}
Recall that $\dim \aut(\BP^n) = n^2+2n$ and $\dim \aut(\Q^n) = \dim \mathfrak{so}_{n+2} = \frac{(n+1)(n+2)}{2}$. The previous theorem indicates  that there may exist big gaps between the dimensions of automorphism groups of Fano manifolds of Picard number one.

To prove Theorem \ref{t.main}, we first show the following result, which could be of independent interest.

\begin{thm}
\label{thm:auto}
Let $n \geq 2$ be an integer.
 Let $X\subsetneq \p^n$ be an irreducible and non-degenerate subvariety of codimension $c \geq 1$, which is not a cone.
Let $G_n^X = \{g \in {\rm PGL}_{n+1}(\mathbb{C})| g(X) =X\}$. 
 Then
\begin{itemize}
\item[(a)]
$\dim G_n^X \leq \frac{n(n+1)}{2} - \frac{(c-1)(c+4)}{2}. $
\item[(b)]  $\dim G_n^X = \frac{n(n+1)}{2}$  if and only if $X$ is a smooth quadratic hypersurface.
\item[(c)] if $X$ is smooth and is not a quadratic hypersurface, then $\dim G_n^X \leq \frac{n(n+1)}{2} -3.$
\end{itemize}
\end{thm}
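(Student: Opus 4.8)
The plan is to work at the level of Lie algebras and to localize at a general point. Write $\fg := \mathrm{Lie}(G_n^X)\subseteq\fsl(V)$ where $V=\C^{n+1}$ and $\p^n=\p(V)$, so that $\dim G_n^X=\dim\fg$ and $\fg$ is exactly the subalgebra of $\fsl(V)$ whose flow preserves the affine cone $\widehat X\subseteq V$. Fix a general, hence smooth, point $x\in X$, with line $\widehat x\subseteq V$; choose a splitting $V=\widehat x\oplus W$ and use the associated contact grading $\fsl(V)=\fg_{-1}\oplus\fg_0\oplus\fg_1$ with $\fp_x=\fg_0\oplus\fg_1$, in which $\fg_{-1}\cong W=T_x\p^n$ are the ``translations'', $\fg_0\cong\fgl(W)$ is the Levi, and $\fg_1\cong W^*$ is the space of ``special conformal'' vector fields, which in the affine chart centred at $x$ are of the form $\ell(w)\,E$ with $\ell$ a linear form and $E=\sum w_j\partial_{w_j}$ the Euler field. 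The first, and cleanest, step is the observation that $\fg\cap\fg_1=0$: if $\ell(w)E$ is tangent to $X$, then $E$ is tangent to $X$ at every point of $X\smallsetminus\{\ell=0\}$, and since $X$ is not a cone with vertex $x$ this set cannot be dense, so $\ell$ vanishes on $X$ and hence $\ell=0$ by non-degeneracy. Therefore $\fg$ injects into $\fg_{-1}\oplus\fg_0$; its image in $\fg_{-1}=T_x\p^n$ is the tangent space to the orbit $G_n^X\!\cdot x$, which is contained in $T:=T_xX$; and the isotropy $\fg_x:=\fg\cap\fp_x$ injects into $\fgl(W)$ with image preserving the subspace $T\subseteq W$. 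Writing $m=\dim X$ and $N=W/T$ (so $\dim N=c$), this already gives $\dim\fg=\dim\fg_x+\dim(G_n^X\!\cdot x)\le\dim\fg_x+m$.

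The next step is to bound $\dim\fg_x$ using the projective second fundamental form $|II_x|\subseteq|\SYM^2 T^*|$, a $(c-1)$-dimensional linear system of quadrics on $\p(T)\cong\p^{m-1}$. The image of $\fg_x$ in $\fgl(W)$ preserves not only the flag $0\subseteq T\subseteq W$ but also $|II_x|$, and non-degeneracy of $X$ (together with $X$ not being a cone) forces $|II_x|$ to be non-degenerate — it spans a large subspace of $\SYM^2 T^*$ and its base locus is confined to the locus of lines on $X$ through $x$. Decomposing $\fg_x$ according to its image in $\fgl(T)$: this image lies in the subalgebra of $\fgl(T)$ preserving $|II_x|$, whose dimension is at most $\binom{m}{2}+1$ when $c=1$ and drops by an amount quadratic in $c$ as the codimension grows; the kernel injects into $\Hom(N,T)\oplus\fgl(N)$, where the $\fgl(N)$-part is pinned down by compatibility with $II_x$ and the remaining ``$\Hom(N,T)$''-part is, up to the $\fsl$-relation, a first prolongation of the linear automorphism algebra of the base locus of $|II_x|$ in $\p(T)$. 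Such a prolongation vanishes except for the short list of varieties classified in \cite{FH1} and \cite{FH2} (projective spaces, quadrics, Veronese and Segre varieties, symplectic and some other Grassmannians, $\Lag(6)$, and so on), where it is small and known explicitly. Assembling these estimates and carrying out the bookkeeping in $c$ should yield $\dim\fg_x\le\binom{m}{2}+(m-1)c+2$, hence $\dim G_n^X\le\binom{m+1}{2}+(m-1)c+2=\frac{n(n+1)}{2}-\frac{(c-1)(c+4)}{2}$, which is part (a); the bound is sharp, as one checks on rational normal curves and on the Veronese surface in $\p^5$, and tracing equality backwards forces $c=1$, a dense orbit, $|II_x|$ a single smooth quadric, and maximal prolongation.

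For parts (b) and (c) I would argue as follows. If $\dim G_n^X=\frac{n(n+1)}{2}$ then part (a) forces $c=1$, so $X$ is a hypersurface, not a cone, of some degree $d$; moreover $d\le 2$, for if $d\ge 3$ then $\fg$ stabilizes the line $\C f\subseteq\SYM^d V^*$ cut out by $X$, while a group of dimension close to $\binom{n+1}{2}$ acting on $\C^{n+1}$ must act essentially as $\mathrm{SO}_{n+1}$ on its standard representation, which admits no invariant form of degree $\ge 3$. Thus $X$ is a non-degenerate quadric hypersurface that is not a cone, i.e. a smooth quadric, and conversely $G_n^X=\mathrm{PO}_{n+1}$ indeed has dimension $\frac{n(n+1)}{2}$; this is (b). For (c): if $c\ge 2$ then $\frac{(c-1)(c+4)}{2}\ge 3$ and (a) is already enough; if $c=1$ and $X$ is smooth but not a quadric, then $X$ is a smooth hypersurface of degree $\ge 3$, whose automorphism group has dimension at most $1$ (and is finite once $n\ge 3$), which is $\le\frac{n(n+1)}{2}-3$ for $n\ge 3$, while for $n=2$ every smooth plane curve of degree $\ge 3$ has $\dim G_2^X=0=\frac{2\cdot 3}{2}-3$.

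The step I expect to be the main obstacle is the middle one: extracting the precise codimension-dependent contribution of the second fundamental form to $\dim\fg_x$ with the stated coefficient, and verifying that in each of the finitely many cases of \cite{FH1} and \cite{FH2} where the prolongation of the general isotropy is nonzero — precisely the varieties that are ``close to'' homogeneous — the inequality in (a) is still respected, with equality only for the smooth quadric.
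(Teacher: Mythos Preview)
Your opening move --- showing $\fg\cap\fg_1=0$ by the Euler-field argument, hence $\fg\hookrightarrow\fg_{-1}\oplus\fg_0$ --- is correct and clean. But the bound $\dim\fg_x\le\binom{m}{2}+(m-1)c+2$ is not established by what follows, and this is a genuine gap rather than missing bookkeeping. The constraint that the image of $\fg_x$ in $\fgl(T)\oplus\fgl(N)$ preserve $II_x$ is far too weak on its own: already for the rational normal curve ($m=1$) it still permits a subalgebra of dimension of order $(n-1)^2$, whereas the true $\dim\fg_x$ is $2$; the collapse comes from the entire tower of higher fundamental forms, not just the second. Your proposed identification of the $\Hom(N,T)$-component of the kernel with a first prolongation of $\aut$ of the base locus of $|II_x|$ is neither proved nor standard, and even granting it, the promised case-check against \cite{FH1}, \cite{FH2} is not carried out. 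Your sketch for (b) has a separate hole: large stabilisers of a line in $\Sym^d V^*$ need not be orthogonal groups (Borels are larger and stabilise the highest-weight line $[x_0^d]$), so ``must act essentially as $\mathrm{SO}_{n+1}$'' requires the not-a-cone hypothesis in a way you do not supply.

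The paper takes a completely different and much more elementary route: induction on $n$ through general hyperplane sections. One shows that $X\cap H$ remains irreducible, non-degenerate, and not a cone (Lemma~\ref{lem:nondegenerate:cutout}), and that the restriction $G_n^X\cap G_n^H\to G_{n-1}^{X\cap H}$ has \emph{finite} kernel (Lemmas~\ref{lem:restriction:kernel:eigenvalue:1:identity}--\ref{lem:restriction:kernel:finite:set}): any kernel element has a well-defined ``special eigenvalue'' $\lambda$, and an elementary orbit argument on lines through its fixed point forces $\lambda$ to be a root of unity of order at most $d(X)$, else $X$ would be a cone. Combined with $\dim G_n/G_n^H=n$ this gives $\dim G_n^X\le\dim G_{n-1}^{X\cap H}+n$; iterating down to a non-degenerate curve $C\subset\p^{c+1}$, where $\dim\Aut(C)\le 3$, yields $\dim G_n^X\le 3+\sum_{j=c+2}^n j$, which is exactly (a). Parts (b) and (c) follow from the same recursion (for $\deg X\ge 3$ the section by a line already has $\ge 3$ points, so $\dim G_1=0$) together with the classical finiteness of linear automorphisms of smooth hypersurfaces of degree $\ge 3$. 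No Lie-algebra filtration, second fundamental form, or prolongation theory enters.
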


The idea of the proof of Theorem \ref{t.main} is similar to that in \cite{HM}: the dimension of $\aut(X)$ is controlled by $n + \dim \mathfrak{aut}(\hat{\mathcal{C}_x})+\dim \mathfrak{aut}(\hat{\mathcal{C}_x})^{(1)}$, where $\mathcal{C}_x$ is the VMRT of $X$ at a general point, $\mathfrak{aut}(\hat{\mathcal{C}}_x)$ is the Lie algebra of infinitesimal automorphisms of $\hat{\mathcal{C}_x}$ while $\mathfrak{aut}(\hat{\mathcal{C}_x})^{(1)}$ is the first prolongation of  $\mathfrak{aut}(\hat{\mathcal{C}}_x)$ (cf. Definition \ref{d.prolong}).  By Theorem \ref{thm:auto}, we have an optimal bound for
$\dim \mathfrak{aut}(\hat{\mathcal{C}_x})$, which gives the bound for $\dim \aut(X)$ in the case when $\mathfrak{aut}(\hat{\mathcal{C}_x})^{(1)} =0$.
For the case when  $\mathfrak{aut}(\hat{\mathcal{C}_x})^{(1)} \neq 0$, we have a complete classification of all such embeddings $\mathcal{C}_x \subset \BP T_xX$ by \cite{FH1} and \cite{FH2}. Then a case-by-case check gives us the bound in Theorem \ref{t.main}.  Finally we apply Cartan-Fubini extension theorem of Hwang-Mok (\cite{HM0}) and the result of Mok (\cite{M}) to recover the variety $X$ from its VMRT.
\bigskip

{\em Convention:} For a projective variety $X$, we denote by $\aut(X)$ its Lie algebra of automorphism group, while for an embedded variety $S \subset \BP V$, we denote by $\aut(\hat{S})$ the  Lie algebra of infinitesimal automorphisms of $\hat{S}$, which is given by
$$
\aut(\hat{S}):=\{g \in {\rm End}(V)| g(\alpha)\in T_\alpha(\hat{S}), \text{for any smooth point}\ \alpha \in \hat{S}\}.
$$

{\em Acknowledgements:}  We are very grateful to Francesco Russo and Fyodor Zak for discussions on Lemma \ref{lem:nondegenerate:cutout}, the proof of which presented here is due to Fyodor Zak.  Baohua Fu is supported by National Natural Science Foundation of China (No. 11688101, 11621061 and 11771425).

\section{Automorphism group of embedded varieties}
 For each positive integer $n$, we let $$G_n= \mathrm{Aut}(\p^n) = \mathrm{PGL}_{n+1}(\mathbb{C}).$$ If $X\subseteq \p^n$ is a subvariety, we denote by $G_n^X\subseteq G_n$ the subgroup of elements $g$  such that  $g(X)=X$.
Note that if $X \subset \p^n$ is non-degenerate, then $ G_n^X \subset {\rm Aut}(X)$.

  The goal of this section is to prove the following theorem, which is more general than Theorem \ref{thm:auto}.


\begin{thm}
\label{thm:autocone}
Assume that $n\geq 2$. Let $X\subsetneq \p^n$ be an irreducible and non-degenerate subvariety of codimension $c \geq 1$.
Then the set $$C_X:=\{x\in X|\,\, X \text{ is a cone with vertex } x\}$$ is a linear subspace.  Set $r_X:=-1$ if $C_X=\emptyset$ and $r_X:=\dim C_X$ otherwise.
Then we have
$$\dim G_n^X \leq \frac{(n-r_X-1)(n-r_X)}{2} - \frac{(c-1)(c+4)}{2}+(r_X+1)(n+1). $$
\end{thm}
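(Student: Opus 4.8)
The plan is to first establish the structural claim that $C_X$ is a linear subspace, then reduce the bound on $\dim G_n^X$ to the non-cone case (Theorem \ref{thm:auto}(a)) via a cone-splitting argument. For the structural statement: if $x,y \in C_X$ are distinct, then $X$ is a cone with vertex $x$ and a cone with vertex $y$; a standard lemma on cones (the join of two cone-vertices lies in the vertex locus, because the line $\overline{xy}$ together with any point of $X$ spans a plane in which $X$ contains a whole pencil of lines through a varying point, forcing $\overline{xy} \subset C_X$ and more) shows that $C_X$ is closed under taking the linear span of its points; since it is also visibly Zariski-closed and any two of its points are joined by a line inside it, $C_X$ is a linear subspace $\Lambda \cong \p^{r_X} \subset \p^n$. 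I would cite or quickly reprove this; the key input is that a cone over a cone is a cone over the join of the vertices.

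Next, pick a linear complement: write $\p^n = \langle \Lambda, \p^{n-r_X-1}\rangle$ and let $Y = X \cap \p^{n-r_X-1}$ be a general linear section complementary to $\Lambda$, so that $X$ is the cone over $Y$ with vertex $\Lambda$, i.e.\ $X = \mathrm{Join}(\Lambda, Y)$. Here $Y \subsetneq \p^{n-r_X-1}$ is irreducible, non-degenerate (non-degeneracy of $X$ forces $Y$ to span its ambient $\p^{n-r_X-1}$), of the same codimension $c$, and crucially $Y$ is \emph{not} a cone — any vertex of $Y$ would enlarge $C_X$ beyond $\Lambda$. The group $G_n^X$ must preserve the vertex locus $C_X = \Lambda$, hence sits inside the parabolic $P \subset \mathrm{PGL}_{n+1}$ stabilizing $\Lambda$. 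Writing $\mathbb{C}^{n+1} = W \oplus U$ with $\p(W) = \Lambda$ (so $\dim W = r_X+1$, $\dim U = n-r_X$), elements of $P$ have block-upper-triangular form $\begin{pmatrix} A & B \\ 0 & D\end{pmatrix}$, and the induced action on the quotient $\p(U) = \p^{n-r_X-1}$ must preserve $Y$ (since the fibration $X \setminus \Lambda \to Y$ is canonical). This gives a homomorphism $G_n^X \to G_{n-r_X-1}^Y$ whose kernel $K$ consists of elements acting trivially on $\p(U)$.

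The dimension count then reads $\dim G_n^X \leq \dim K + \dim G_{n-r_X-1}^Y$. The kernel $K$ is contained in the subgroup of $P$ acting trivially on the quotient $\p(U)$, which modulo scalars has dimension at most $\dim \mathrm{GL}(W) + \dim \mathrm{Hom}(U,W) - 1 = (r_X+1)^2 + (r_X+1)(n-r_X) - 1$; I will need the slightly sharper bookkeeping that, combined with the scalar already counted in $G_{n-r_X-1}^Y$ or in $\mathrm{GL}(W)$, this is at most $(r_X+1)(n+1) - \binom{r_X+1}{?}$ — in any case bounded by $(r_X+1)(n+1)$, which is the term appearing in the statement (one checks $(r_X+1)^2 + (r_X+1)(n-r_X) = (r_X+1)(n+1)$ exactly, so the scalar subtraction is absorbed harmlessly). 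Since $Y$ is non-degenerate of codimension $c$ and not a cone, Theorem \ref{thm:auto}(a) applied in $\p^{n-r_X-1}$ gives $\dim G_{n-r_X-1}^Y \leq \frac{(n-r_X-1)(n-r_X)}{2} - \frac{(c-1)(c+4)}{2}$. Adding the two bounds yields exactly
\[
\dim G_n^X \leq \frac{(n-r_X-1)(n-r_X)}{2} - \frac{(c-1)(c+4)}{2} + (r_X+1)(n+1),
\]
as claimed. The main obstacle I anticipate is making the two reductions fully rigorous: first, that a general complementary section $Y$ is genuinely non-degenerate, irreducible, of the stated codimension, and has empty vertex locus (the last point is where one must argue that $C_X$ is \emph{exactly} $\Lambda$, not larger — this is the content of the structural claim and must be proved carefully, presumably as a separate lemma); and second, the exact block-matrix dimension accounting for the parabolic $P$ and the identification of $\ker(G_n^X \to G_{n-r_X-1}^Y)$, where one must be careful not to double-count the central torus. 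Once $r_X = -1$, the bound degenerates to Theorem \ref{thm:auto}(a), so this theorem is strictly a cone-enhanced version of it, and the base case is precisely what we are allowed to assume.
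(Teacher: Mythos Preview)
Your proposal is correct and follows essentially the same route as the paper: reduce to the non-cone case by observing that $G_n^X$ preserves the vertex locus $C_X=\Lambda$, sits in the parabolic stabilizing $\Lambda$, and maps to $G_{n-r_X-1}^Y$ for the complementary slice $Y$, with kernel of dimension $(r_X+1)(n+1)$, whereupon Theorem \ref{thm:auto}(a) applied to $Y$ finishes. Two small remarks: the paper simply cites \cite{Rus}, Proposition 1.3.3, for the linearity of $C_X$; and your kernel bookkeeping is slightly muddled (the correct count is exactly $(r_X+1)(n+1)$, not $(r_X+1)(n+1)-1$, since the scalar in the $U$-block is an honest extra parameter before passing to $\mathrm{PGL}$), though you arrive at the right upper bound regardless---in fact the paper observes that $\Ker\rho\subseteq G_n^X$ and that the image is exactly $G_{n-r_X-1}^Y$, yielding an equality $\dim G_n^X=\dim G_{n-r_X-1}^Y+(r_X+1)(n+1)$ rather than just your inequality.
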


The idea of the proof is  to cut $X$ by a general hyperplane, and then use induction  on $n$ to conclude.  To this end, we will first introduce the following notation.
For a hyperplane $H$ in $\p^n$, we may choose a coordinates system $[x_0:x_1:\cdots : x_n]$ such that $H$ is defined by $x_0=0$.  For every  $g\in G_n$, it has a representative  $M\in \mathrm{GL}_{n+1}(\mathbb{C})$, such that  its action on $\p^n$ is given by $g([x_0:x_1:\cdots : x_n])=[y_0:y_1:\cdots : y_n]$, where
$$\left( \begin{matrix} y_0\\ y_1\\ \vdots \\ y_n \end{matrix}  \right)  = M \left( \begin{matrix} x_0\\ x_1\\ \vdots \\ x_n \end{matrix}  \right)$$
Then  $g\in G_n^H$ if and only if it  can be represented by a matrix of the shape

$$\left(
\begin{array}{c|c}
a_0&\begin{matrix}0 & \cdots & 0 \end{matrix}   \\
\hline
 \begin{matrix}a_1\\ a_2\\ \vdots \\ a_n \end{matrix} & {A}
\end{array}
\right)
$$

There is a natural morphism   $r_H: G_n^H \to \mathrm{Aut}(H) \cong  \mathrm{PGL}_{n}(\mathbb{C})$.  Then an element $g$ is in the kernel $\mathrm{Ker}\, r_H$  if and only if it can be represented by a matrix of the shape

$$\left(
\begin{array}{c|c}
\lambda &\begin{matrix}0 & \cdots & 0 \end{matrix}   \\
\hline
 \begin{matrix}a_1\\ a_2\\ \vdots \\ a_n \end{matrix} &  {\mathrm{Id}}_n
\end{array}
\right)
$$

For such $g\in \mathrm{Ker}\, r_H$,  we call $\lambda$ the special eigenvalue of $g$. The action of $g$ on the normal bundle of $H$ is then  the multiplication by $\lambda$.
 We see that this is  independant of the choice of representatives of $g$ in  $\mathrm{GL}_{n+1}(\mathbb{C})$.   We also note that if $g,h$ are two elements in $\mathrm{Ker}\, r_H$, with special eigenvalues $\lambda$ and $\mu$ respectively, then the special eigenvalue of $gh$ is equal to $\lambda \mu$.
 This gives a  homomorphism  $\chi_H: \mathrm{Ker}\, r_H \to \mathbb{C}^*$.

Before giving the proof of Theorem \ref{thm:auto}, we will first prove several lemmas.

\begin{lemma}
\label{lem:fixing:hyperplane:non:trivial}
Let $H$ be a hyperplane in $\p^n$, and let $X\subseteq \p^n$ be any subvariety.  Then  $$\dim\, G_n^X \leq \dim\, (G_n^H\cap G_n^X) + n.$$
\end{lemma}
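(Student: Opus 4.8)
The plan is to deduce this from the orbit--stabilizer theorem applied to the action of $G_n^X$ on the dual projective space. Write $(\p^n)^\vee$ for the projective space of hyperplanes in $\p^n$, a variety of dimension $n$ on which $G_n = \mathrm{PGL}_{n+1}(\C)$ acts; since $G_n^X$ is a closed algebraic subgroup of $G_n$ (it is the stabilizer of the closed subvariety $X$), it acts on $(\p^n)^\vee$ as well.

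Next I would examine the orbit of the point $[H] \in (\p^n)^\vee$ corresponding to the given hyperplane $H$. Its stabilizer inside $G_n^X$ is precisely $\{g \in G_n^X \mid g(H) = H\} = G_n^X \cap G_n^H$. Working over $\C$, where every algebraic group is smooth, the orbit $G_n^X \cdot [H]$ is a locally closed subvariety of $(\p^n)^\vee$ and the orbit-dimension formula gives
$$\dim G_n^X = \dim\big(G_n^X \cdot [H]\big) + \dim\big(G_n^X \cap G_n^H\big).$$
Since $G_n^X \cdot [H] \subseteq (\p^n)^\vee$, its dimension is at most $n$, and substituting this bound yields $\dim G_n^X \le \dim(G_n^X \cap G_n^H) + n$, which is the assertion.

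There is no real obstacle in this argument; the only points to check are the standard facts that $G_n^X$ is a closed subgroup of $G_n$ and that its action on $(\p^n)^\vee$ is algebraic, so that the orbit of $[H]$ is locally closed of the expected dimension. If one prefers to avoid the dual space, the same computation can be phrased via the map $G_n^X \to G_n^X \cdot H$, $g \mapsto g(H)$, whose image has dimension at most $n$ and whose fibers are the cosets of $G_n^X \cap G_n^H$.
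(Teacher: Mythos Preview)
Your argument is correct and is essentially the same as the paper's, just written out in more detail: the paper's one-line proof ``follows from the fact that $\dim G_n = \dim G_n^H + n$'' amounts to observing that $G_n/G_n^H \cong (\p^n)^\vee$ has dimension $n$, and then restricting the orbit map to $G_n^X$ exactly as you do. The only difference is that the paper leaves the orbit--stabilizer step implicit, whereas you spell it out.
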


\begin{proof}
This lemma follows from the fact that  $\dim\, G_n = \dim\, G_n^H + n$.
\end{proof}

We also need the following Bertini type lemma.

\begin{lemma}
\label{lem:nondegenerate:cutout}
Assume that $n\geq 2$. Let $X\subseteq \p^n$ be a non-degenerate irreducible subvariety of positive dimension which is not a cone. Then for a general hyperplane $H$, the intersection $X\cap H$ is still non-degenerate in $H$ and is not a cone.
\end{lemma}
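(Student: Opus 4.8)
The plan is to prove the two assertions — non-degeneracy of $X\cap H$ in $H$, and that $X\cap H$ is not a cone — separately, after recording two classical facts to be used freely: (i) for an irreducible non-degenerate positive-dimensional $Z\subseteq\p^m$ one has that $Z$ is a cone $\iff\bigcap_{z\in Z_{\mathrm{sm}}}\mathbb{T}_zZ\neq\emptyset$, where $\mathbb{T}_zZ\subseteq\p^m$ is the embedded projective tangent space, and this intersection is then the linear space of vertices; (ii) a general hyperplane section of a variety is reduced, and irreducible as soon as the variety has dimension $\ge2$ (Bertini, over $\C$). Non-degeneracy is the easy half: if $R=\bigoplus_m R_m$ is the homogeneous coordinate ring of $X$, non-degeneracy gives $\dim R_1=n+1$, and for any hyperplane $H=\{\ell=0\}$ the scheme $X\cap H$ has coordinate ring $R/\ell R$ inside $H$, with $\dim(R/\ell R)_1=n$ from $0\to R_0\xrightarrow{\ \ell\ }R_1\to(R/\ell R)_1\to0$; so the scheme $X\cap H$ spans $H$, and for general $H$ it is reduced, whence the variety $X\cap H$ spans $H$ as well.

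For the "not a cone" half I would argue by contradiction. If $\dim X=1$ the statement is trivial, since $X\cap H$ is a reduced set of $\deg X\ge n\ge2$ points, which is not a cone; so put $d:=\dim X\ge2$ and suppose that for general $H$ the irreducible variety $X\cap H$ is a cone. Pick $v_H$ in its vertex set $\mathrm{Vert}(X\cap H)$, a non-empty linear subspace of $X\cap H$, so in particular $v_H\in X$. Because $X$ is not a cone, for every $v\in\p^n$ the contact locus $\Sigma_v:=\overline{\{x\in X_{\mathrm{sm}}:v\in\mathbb{T}_xX\}}$ is a proper closed subvariety of $X$, hence $\dim\Sigma_v\le d-1$. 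Using the transversality identity $\mathbb{T}_y(X\cap H)=\mathbb{T}_yX\cap H$, valid for general $H$ and general $y\in X\cap H$, the inclusion $v_H\in\mathbb{T}_y(X\cap H)$ gives $y\in\Sigma_{v_H}$ for a dense set of such $y$, hence $X\cap H\subseteq\Sigma_{v_H}$; as $X\cap H$ is irreducible of dimension $d-1$ and $\dim\Sigma_{v_H}\le d-1$, it is an irreducible component of $\Sigma_{v_H}$, and being non-degenerate in $H$ it equals $\langle X\cap H\rangle=H$. Thus in the incidence set $\sV:=\{(v,H)\in\p^n\times(\p^n)^*:\ H\text{ general},\ v\in\mathrm{Vert}(X\cap H)\}$ the projection to $(\p^n)^*$ is dominant (so $\dim\sV\ge n$), while the projection to $\p^n$ has image in $X$ with finite fibres — over $v$, every admissible $H$ is $\langle W\rangle$ for one of the finitely many $(d-1)$-dimensional irreducible components $W$ of $\Sigma_v$. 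Hence $\dim\sV\le\dim X=d\le n-1$, a contradiction, so a general $X\cap H$ is not a cone.

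The main obstacle, and where the argument really needs care, is making the transversality identity and the incidence bookkeeping rigorous for a general hyperplane all at once: one must know that for general $H$ the hyperplane meets $X_{\mathrm{sm}}$ transversally along a dense open set (so that $\mathbb{T}_y(X\cap H)=\mathbb{T}_yX\cap H$ there), that $X\cap H$ is reduced and, for $d\ge2$, irreducible, and — crucially — that $H\mapsto\mathrm{Vert}(X\cap H)$ varies in a family well enough that $\sV$ is a genuine constructible set of the expected dimension; the finite-fibre claim then rests on $X\cap H$ being recoverable from the component $W$ of $\Sigma_v$, which is exactly where non-degeneracy of the section is fed back in. A lesser subtlety is that the non-degeneracy step genuinely uses reducedness of the general section (the statement fails in positive characteristic). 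These are the points at which one leans on Zak's proof.
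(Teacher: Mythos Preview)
Your argument is correct and in fact shares the paper's overall architecture: both set up the incidence correspondence $\sV=\{(v,H):v\ \text{is a vertex of}\ X\cap H\}$ and derive a contradiction from $\dim\sV\ge n$ (second projection dominant) versus $\dim\sV\le\dim X<n$ (first projection has image in $X$ with finite fibres). The genuine difference is in how the finite-fibre claim is proved. The paper argues directly with lines: if the fibre over some $x$ were positive-dimensional, the corresponding one-parameter family of hyperplanes would sweep out all of $\p^n$, so every $x'\in X\setminus\{x\}$ lies on some $H$ with $X\cap H$ a cone of vertex $x$, forcing $\overline{xx'}\subset X$ and hence $X$ itself a cone---contradiction. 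Your route instead goes through the tangent-space characterisation of cones and the contact loci $\Sigma_v$: you recover $H$ as the span of the unique $(d-1)$-dimensional component of $\Sigma_v$ containing $X\cap H$, and finiteness of components gives finiteness of the fibre. The paper's version is shorter and entirely elementary (no Bertini irreducibility, no transversality $\mathbb{T}_y(X\cap H)=\mathbb{T}_yX\cap H$, no embedded tangent spaces), which is its main advantage and sidesteps exactly the constructibility and genericity bookkeeping you flag at the end; your version has the compensating virtue of making transparent where non-degeneracy of the section is actually used (to recover $H$ from the component of $\Sigma_v$), and your treatment of the non-degeneracy half is more detailed than the paper's, which simply asserts it.
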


\begin{proof}
Since $X\subseteq \p^n$ is irreducible and non-degenerate, the intersection of $X$ and a general hyperplane $H$  is non-degenerate in $H$.

Let $V\subset X\times (\p^n)^*$ be the subset of pair $(x,H)$ such that $H\cap X$ is a cone with vertex $x$.
Set $\pi_1:V\to X$ and $\pi_2:X\to (\p^n)^*$ the projections to the first and the second factors. If $\pi_2$ is not surjective, we concludes the proof.
So we may assume that $\pi_2$ is surjective.
Hence $\dim V\geq n$. Set $Y:=\pi_1(V).$

We first  assume that there is  $x\in Y$ such that $\dim \pi_1^{-1}(x)\geq 1$.
Since any non trivial complete one-dimensional family of hyperplanes in $\p^n$ covers the whole $\p^n$, this condition  implies that for every point $x'\in X\backslash \{x\}$, there is some hyperplane $H$ containing $x$ and $x'$ such that $H\cap X$ is a cone with vertex $x$.
Therefore, the line joining $x$ and $x'$ is contained in $H\cap X$ and hence in $X$.
This shows that $X$ is a cone with vertex $x.$
We obtain a contradiction.

So the morphism $\pi_1:V\to Y$ is finite.
Then we get $$n\leq \dim V=\dim Y\leq \dim X<n,$$  which is a contradiction. This concludes the proof.
\end{proof}

In the following lemmas, we will show that the kernel of $G_n^X\cap G_n^H\to G_{n-1}^{X\cap H}$ is a finite set. Note that this kernel is nothing but $G_n^X\cap \mathrm{Ker}\, r_H$, as $X \cap H$ is non-degenerate by Lemma \ref{lem:nondegenerate:cutout}.  We will discuss according to the special eigenvalue of an element inside. We will first study the case when it is equal to $1$.

\begin{lemma}
\label{lem:restriction:kernel:eigenvalue:1:identity}
Assume that $n\geq 2$. Let $X\subseteq \p^n$ be an irreducible subvariety which is not a cone. Let $H$ be a hyperplane in $\p^n$ such that $X\not\subseteq H$. Let $g\in \mathrm{Ker}\, r_H \cap G_n^X$. If the special eigenvalue of $g$ is $1$, then $g$ is the identity in $G_n$. In other words, the map
$ \chi_H: \mathrm{Ker}\, r_H  \cap G_n^X \to \mathbb{C}^*$ is injective.
\end{lemma}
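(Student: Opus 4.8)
The plan is to analyze directly the matrix form of such a $g$. Since $g \in \mathrm{Ker}\, r_H$ with special eigenvalue $1$, in the chosen coordinates (with $H = \{x_0 = 0\}$) it is represented by a matrix of the shape
$$
M = \left(
\begin{array}{c|c}
1 & \begin{matrix} 0 & \cdots & 0 \end{matrix} \\
\hline
\begin{matrix} a_1 \\ \vdots \\ a_n \end{matrix} & \mathrm{Id}_n
\end{array}
\right),
$$
so that $g$ is a unipotent transformation: writing $a = (a_1,\dots,a_n)$, the action is $g([x_0 : x']) = [x_0 : x' + x_0 a]$ for $x' \in \mathbb{C}^n$. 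In particular $g^k$ is represented by the same shape of matrix with $a$ replaced by $ka$, so $g$ has infinite order unless $a = 0$, i.e.\ unless $g = \mathrm{id}$. Our goal is to show $a = 0$.

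First I would observe that on the affine chart $U = \{x_0 \neq 0\} \cong \mathbb{C}^n$, the map $g$ becomes the \emph{translation} $x' \mapsto x' + a$. Since $X \not\subseteq H$, the set $X \cap U$ is a nonempty (hence dense, as $X$ is irreducible) open subset of $X$, and $g(X) = X$ forces $g(X \cap U) = X \cap U$; thus the closure $\overline{X \cap U} = X$ is invariant under the translation $x' \mapsto x' + a$, and also under $x' \mapsto x' + ka$ for all $k \in \mathbb{Z}$. The key point is then that invariance of a variety under a one-parameter unipotent subgroup of translations forces it to be a cone (or, more precisely, invariant under translation in the direction $a$), contradicting the hypothesis that $X$ is not a cone — unless $a = 0$.

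The step I expect to require the most care is passing from ``$X$ is invariant under the $\mathbb{Z}$-action $x' \mapsto x' + ka$'' to ``$X$ is invariant under the full $1$-parameter group $x' \mapsto x' + ta$, $t \in \mathbb{C}$,'' and then to a contradiction with the cone hypothesis. For the first implication: fix a polynomial $f$ vanishing on $X$; then $f(x' + ka) = 0$ on $X$ for every integer $k$. For a fixed point $x' \in X$, the function $t \mapsto f(x' + ta)$ is a polynomial in $t$ vanishing at all integers, hence identically zero, so $f(x' + ta) = 0$ on $X$ for all $t \in \mathbb{C}$; as this holds for every $f$ in the ideal of $X$, the variety $X$ is stable under the additive group $\{x' \mapsto x' + ta\}$. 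If $a \neq 0$, pick any point $p \in X \cap U$ (which exists since $X \not\subseteq H$); the whole affine line $p + \mathbb{C}a$ lies in $X$. Taking its projective closure, this line meets $H$ in the point $[0 : a]$ at infinity, and running this argument over a dense set of $p \in X$ shows $X$ contains all lines through $[0:a]$ meeting a dense subset of $X$, whence $X$ is a cone with vertex $[0:a] \in H$ — contradicting the hypothesis. (One can phrase this more cleanly: a non-degenerate variety stable under a nontrivial translation subgroup is a cone with vertex the corresponding point at infinity.) Therefore $a = 0$ and $g = \mathrm{id}$. Finally, since $\chi_H(g) = 1 \iff$ the special eigenvalue of $g$ is $1 \iff g = \mathrm{id}$ on $\mathrm{Ker}\, r_H \cap G_n^X$, and $\chi_H$ is a group homomorphism, this shows $\chi_H$ is injective on $\mathrm{Ker}\, r_H \cap G_n^X$.
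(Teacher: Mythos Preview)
Your proof is correct and follows essentially the same approach as the paper's. Both arguments write $g$ in coordinates as translation by $a=(a_1,\dots,a_n)$ on the affine chart $\{x_0\neq 0\}$, observe that the $\mathbb{Z}$-orbit of any $x\in X\setminus H$ lies on the line through $x$ and $[0:a]$, deduce that this line is contained in $X$, and conclude by continuity that $X$ is a cone with vertex $[0:a]$; the only cosmetic difference is that the paper says ``infinitely many points of $X$ on a line force the line into $X$'' directly, whereas you spell this out as the polynomial identity $t\mapsto f(x'+ta)$ vanishing at all integers.
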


\begin{proof}
Assume the opposite.   We  choose  a homogeneous coordinates system $[x_0:x_1:\cdots : x_n]$ such that $H$ is defined by $x_0=0$, and that   $g([x_0:x_1:\cdots : x_n]) = [y_0:y_1:\cdots : y_n]$, where

$$ \left(  \begin{matrix}y_0\\ y_1\\ y_2\\ \vdots \\ y_n \end{matrix} \right) =
\left(
\begin{array}{c|c}
1 &\begin{matrix}0 & \cdots & 0 \end{matrix}   \\
\hline
 \begin{matrix}a_1\\ a_2\\ \vdots \\ a_n \end{matrix} &  {\mathrm{Id}}_n
\end{array}
\right)
\left(  \begin{matrix} x_0\\ x_1\\ x_2\\ \vdots \\ x_n \end{matrix} \right) =
\left(  \begin{matrix} x_0\\ x_1+a_1x_0\\ x_2+a_2x_0\\ \vdots \\ x_n +a_nx_0\end{matrix} \right)
$$

By assumption, the $a_i$ are not all equal to zero. Let $p\in \p^n$ be the point with homogeneous coordinates $[0:a_1:\cdots : a_n]$.  For any point $x \in X\backslash H$ with homogeneous coordinates $[1:x_1:\cdots :x_n]$,   the point  $g^k(x)$ has coordinates $$g^k(x)=[1:x_1+ka_1:\cdots : x_n+ka_n].$$ This shows that  all  $g^k(x)$ are on the unique line $L_{p,x}$ passing through $p$ and $x$. Since  the $g^k(x)$ are pairwise different, this implies that $L_{p,x}$ has infinitely many intersection  points with $X$. Therefore, $L_{p,x} \subseteq X$.

Since $X$ is irreducible, every point $y\in X\cap H$ is a limit of points in $X\backslash H$. Hence by continuity, for each point $x\in X\backslash \{p\}$, the line  $L_{p,x}$ is  contained in $X$. This implies that $X$ is a cone with vertex $p$. We obtain a contradiction.
\end{proof}

Now we will look at the case when the special eigenvalue is different from $1$.

\begin{lemma}
\label{lem:intersection:larger:contained}
Assume that $n\geq 2$. Let $X\subseteq \p^n$ be a subvariety. Then there is a number $d(X)$ such that if a line $L$ intersects $X$ at more than $d(X)$ points, then $L\subseteq X$.
\end{lemma}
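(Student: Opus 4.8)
The plan is to reduce the statement to a single Noetherian-type bound on degrees of curves inside $X$. First I would invoke Noether normalization / the existence of a maximal degree of a one-dimensional linear section: let $Z_1,\dots,Z_k$ be the irreducible components of $X$, and for each $i$ let $d_i = \deg Z_i$ (with respect to the embedding in $\p^n$, computed say via a generic linear projection, or simply as the degree of the projective closure of $Z_i$ in the Hilbert-polynomial sense). Set $d(X) := \max_i d_i$, or $d(X):=0$ if $X$ is finite. The key point is classical: if a line $L$ is not contained in any component $Z_i$, then $L\cap Z_i$ is a proper linear section of $Z_i$, hence is a finite scheme of length at most $\deg Z_i = d_i \le d(X)$; in particular $L$ meets $Z_i$ in at most $d(X)$ points. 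Summing, $L$ meets $X$ in at most $k\cdot d(X)$ points, so taking $d(X):=k\cdot\max_i\deg Z_i$ does the job unconditionally.

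The main step, then, is the elementary bound $\#(L\cap Z)\le \deg Z$ for a line $L\not\subseteq Z$ and $Z$ an irreducible (or just equidimensional) closed subvariety of $\p^n$ of positive dimension. I would prove this by choosing a generic linear subspace $\Lambda$ of complementary dimension to $Z$ through $L$ if $\dim Z = n-1$ is a hypersurface — where it is just Bézout — and in general by reducing to the curve case: pick a generic linear subspace $\Pi \supseteq L$ with $\dim \Pi = n - \dim Z + 1$, so that $\Pi\cap Z$ is a curve (possibly reducible) of degree $\le \deg Z$ containing $L\cap Z$ in its support, and then apply the fact that a line not contained in an irreducible curve $C\subset\p^N$ meets it in at most $\deg C$ points. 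Alternatively, and most cleanly, I would just cite that the degree of $Z$ bounds the length of any zero-dimensional linear section, which follows from the additivity of degree under generic hyperplane sections together with the definition of degree via the Hilbert polynomial. Either way the argument is standard intersection theory; the only care needed is handling components of $X$ of dimension $0$ (put them into $d(X)$ by hand, adding their total number of points) and the possibility that $X$ is reducible with components of differing dimension.

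The expected obstacle is essentially bookkeeping rather than substance: one must make sure the chosen $d(X)$ works simultaneously for every line, including lines contained in one component but meeting the others transversally, and lines meeting a zero-dimensional part of $X$. This is handled by the crude choice $d(X) = (\text{number of components of }X) \cdot \max(1,\max_i \deg \overline{Z_i}) + \#\{0\text{-dimensional points of }X\}$, which is visibly a finite number depending only on $X$. No genericity of $L$ is needed — the bound is for \emph{all} lines — which is exactly what later uses of the lemma (applied to the line through a fixed point $p$ and a moving point of $X$) require.
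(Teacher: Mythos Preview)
Your argument is correct, but it takes a different route from the paper's. The paper's proof is more elementary: it chooses homogeneous polynomials $P_1,\dots,P_k$ cutting out $X$ set-theoretically and sets $d(X)$ equal to the maximum of their degrees. If $L$ meets $X$ in more than $d(X)$ points, then each $P_i$ vanishes at more than $\deg P_i$ points of $L$, hence vanishes identically on $L$, so $L\subseteq X$. No intersection theory, degree of components, or decomposition is needed---only the fact that a nonzero polynomial of degree $d$ on $\p^1$ has at most $d$ zeros. Your approach via degrees of irreducible components and B\'ezout-type bounds on zero-dimensional linear sections is also valid, but it invokes more machinery and forces the bookkeeping you describe for mixed-dimensional or reducible $X$, whereas the equation-based argument is uniform and one line. (Incidentally, your worry about a line contained in one component but meeting the others is unnecessary: such a line already lies in $X$, so there is nothing to bound.)
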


\begin{proof}
Assume that $X$ is defined as the common zero locus of homogeneous polynomials $P_1,...,P_k$. Let $d(X)$ be the maximal degree of them. Assume that a line $L$ intersects $X$ at more than $d(X)$ points, then $L$ intersects the zero locus of each $P_i$ at more than $d(X)$ points. By degree assumption, this shows that $L$ is contained in the zero locus of each $P_i$. Hence $L\subseteq X$.
\end{proof}

\begin{lemma}
\label{lem:restriction:kernel:eigenvalue:root}
Assume that $n\geq 2$. Let $X\subseteq \p^n$ be an irreducible subvariety which is not a cone. Let $H$ be a hyperplane in $\p^n$ such that $X\not\subseteq H$. Let $g\in \mathrm{Ker}\, (r_H) \cap G_n^X$.  Then the special  eigenvalue $\lambda$ of $g$ is a root of unity. Moreover, its order is bounded by the number  $d(X)$ from above.
\end{lemma}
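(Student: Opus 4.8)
The plan is to imitate the proof of Lemma~\ref{lem:restriction:kernel:eigenvalue:1:identity}, now keeping track of the powers of the special eigenvalue. Choose homogeneous coordinates $[x_0:\cdots:x_n]$ with $H=\{x_0=0\}$, so that $g$ acts by $g([x_0:\cdots:x_n])=[\lambda x_0:x_1+a_1x_0:\cdots:x_n+a_nx_0]$ for suitable $a_1,\dots,a_n\in\mathbb{C}$. If $\lambda=1$ then $g$ is the identity by Lemma~\ref{lem:restriction:kernel:eigenvalue:1:identity}, and we are done since $d(X)\geq 1$ ($X$ being a proper subvariety). So assume $\lambda\neq 1$, put $b_i:=a_i/(\lambda-1)$, and set $q:=[1:b_1:\cdots:b_n]$, which one checks is a fixed point of $g$. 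An immediate induction on $k$ shows that for every $x=[1:x_1:\cdots:x_n]\in X\setminus H$ and every $k\geq 0$,
$$g^k(x)=[1:b_1+\lambda^{-k}(x_1-b_1):\cdots:b_n+\lambda^{-k}(x_n-b_n)];$$
in the affine chart $\{x_0\neq 0\}$, $g^k$ is the homothety of ratio $\lambda^{-k}$ centered at $q$, so in particular $g^k(x)$ lies on the line $L_{q,x}$ joining $q$ and $x$, which is therefore $g$-invariant.

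I would first show $\lambda$ is a root of unity. Note that $\dim X\geq 1$: a point is a cone, and is thus excluded by hypothesis. As $X$ is irreducible with $X\not\subseteq H$ and $q$ is a single point, we may pick $x\in X\setminus(H\cup\{q\})$. If $\lambda$ were not a root of unity, the scalars $\lambda^{-k}$ ($k\geq 0$) would be pairwise distinct, so the points $g^k(x)$ would be pairwise distinct points of $L_{q,x}\cap X$; then $L_{q,x}\cap X$ is infinite and $L_{q,x}\subseteq X$. Running the continuity argument of Lemma~\ref{lem:restriction:kernel:eigenvalue:1:identity} (the locus $\{y\in X\setminus\{q\}:L_{q,y}\subseteq X\}$ is closed and contains the dense subset $X\setminus(H\cup\{q\})$) yields $L_{q,y}\subseteq X$ for all $y\in X\setminus\{q\}$, i.e.\ $X$ is a cone with vertex $q$ --- a contradiction. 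Hence $\lambda$ is a root of unity; let $m$ be its order.

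To bound $m$, take once more $x\in X\setminus(H\cup\{q\})$. Since $\lambda$ generates the group $\mu_m$ of $m$-th roots of unity, the set $\{\lambda^{-k}:k\geq 0\}$ has exactly $m$ elements, hence the points $g^k(x)$, $k\geq 0$, are $m$ distinct points of $L_{q,x}\cap X$. If $m>d(X)$, then Lemma~\ref{lem:intersection:larger:contained} gives $L_{q,x}\subseteq X$, and the continuity argument above again forces $X$ to be a cone with vertex $q$ --- a contradiction. Therefore $m\leq d(X)$, as claimed.

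The matrix computation and the reduction to the $g$-invariant line $L_{q,x}$ are routine, and the cone-by-continuity mechanism is already available from the previous lemmas; the one point that genuinely requires care is that when $\lambda$ has finite order $m$ the orbit of a point $x\neq q$ consists of \emph{exactly} $m$ distinct points. It is this exact count that lets Lemma~\ref{lem:intersection:larger:contained} be applied with the sharp threshold $d(X)$, thereby upgrading ``$\lambda$ is a root of unity'' to the quantitative bound $m\leq d(X)$.
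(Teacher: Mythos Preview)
Your proof is correct and follows essentially the same strategy as the paper's: locate a fixed point of $g$ outside $H$, observe that $g$-orbits lie on lines through it, and derive a cone contradiction via Lemma~\ref{lem:intersection:larger:contained} and the continuity argument of Lemma~\ref{lem:restriction:kernel:eigenvalue:1:identity}. The only cosmetic difference is that the paper first diagonalizes $g$ (noting that $\lambda\neq 1$ makes $g$ diagonalizable) so that the fixed point becomes $[1:0:\cdots:0]$ and $g^k(x)=[\lambda^k:x_1:\cdots:x_n]$, whereas you find the fixed point $q$ directly by solving $b_i=a_i/(\lambda-1)$.
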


\begin{proof}
We may assume that  $\lambda$  is different from $1$. Then $g$ is diagonalizable in this case. We may choose homogeneous coordinates $[x_0:x_1:\cdots :x_n]$ of $\p^n$ such that $H$ is defined by $x_0=0$ and that $$g([x_0:x_1:\cdots :x_n])=[\lambda x_0:x_1:\cdots :x_n].$$

Assume by contradiction that the order of $\lambda$ is greater than $d(X)$ (by convention, if $\lambda$ is not a root of unity, then its order is $+\infty$). Let $p$ be the point with homogeneous coordinates $[1:0:\cdots : 0]$. For any point $x \in X\backslash (H\cup \{p\})$ with homogeneous coordinates $[1:x_1:\cdots :x_n]$,   the point  $g^k(x)$ has coordinates $$g^k(x)=[\lambda^k: x_1:\cdots : x_n].$$ This shows that  all of the $g^k(x)$ are on the unique line $L_{p,x}$ passing through $p$ and $x$. Moreover, we note that the cardinality of $$\{g^k(x)\ | \ k\in \mathbb{Z}\}$$ is exactly the order of $\lambda$. By Lemma \ref{lem:intersection:larger:contained}, we obtain that the line $L_{p,x}$ is contained in $X$.  By the same continuity argument as in the proof of Lemma \ref{lem:restriction:kernel:eigenvalue:1:identity}, this implies that for any point $x\in X\backslash\{p\}$, the line $L_{p,x}$ is contained in $X$. Hence $X$ is a cone, and we obtain a contradiction.
\end{proof}

\begin{lemma}
\label{lem:restriction:kernel:finite:set}
Assume that $n\geq 2$. Let $X\subseteq \p^n$ be an irreducible subvariety which is not a cone. Let $H$ be a hyperplane in $\p^n$ such that $X\not\subseteq H$. Then  $\mathrm{Ker}\, (r_H) \cap G_n^X$ is a finite set. As a consequence, we have  $$\dim\, G_n^X \leq \dim G_{n-1}^{X\cap H} + n.$$
\end{lemma}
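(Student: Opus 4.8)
The plan is to combine the previous three lemmas. By hypothesis $X \subseteq \p^n$ is irreducible, not a cone, and $X \not\subseteq H$, so all of Lemmas~\ref{lem:restriction:kernel:eigenvalue:1:identity} and~\ref{lem:restriction:kernel:eigenvalue:root} apply. The homomorphism $\chi_H \colon \mathrm{Ker}\, r_H \cap G_n^X \to \C^*$ sends $g$ to its special eigenvalue $\lambda$. By Lemma~\ref{lem:restriction:kernel:eigenvalue:1:identity}, $\chi_H$ restricted to $\mathrm{Ker}\, r_H \cap G_n^X$ is injective. By Lemma~\ref{lem:restriction:kernel:eigenvalue:root}, every element in the image of $\chi_H$ is a root of unity of order at most $d(X)$. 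So the image of $\chi_H$ is contained in the set of roots of unity of bounded order, which is finite; since $\chi_H$ is injective on $\mathrm{Ker}\, r_H \cap G_n^X$, that set itself is finite.

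For the consequence, recall (as noted before the statement of Lemma~\ref{lem:restriction:kernel:eigenvalue:1:identity}, using Lemma~\ref{lem:nondegenerate:cutout} to ensure $X\cap H$ is non-degenerate in $H$, hence that restriction to $H$ is well-defined on stabilizers) that $\mathrm{Ker}\, r_H \cap G_n^X$ is precisely the kernel of the restriction homomorphism $G_n^X \cap G_n^H \to G_{n-1}^{X\cap H}$. A homomorphism of algebraic groups with finite kernel does not decrease dimension, so $\dim\,(G_n^X \cap G_n^H) \leq \dim G_{n-1}^{X\cap H}$. Combining this with Lemma~\ref{lem:fixing:hyperplane:non:trivial}, which gives $\dim G_n^X \leq \dim\,(G_n^H \cap G_n^X) + n$, yields $\dim G_n^X \leq \dim G_{n-1}^{X\cap H} + n$, as claimed.

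The only mild subtlety is the passage from ``finite kernel'' to ``does not decrease dimension'': one should apply the lemma to a general $H$ so that Lemma~\ref{lem:nondegenerate:cutout} guarantees $X\cap H$ is irreducible, non-degenerate and not a cone (so that the later induction feeds on itself cleanly), but for the dimension count here it suffices that $X\cap H$ is non-degenerate in $H$ so the target $G_{n-1}^{X\cap H}$ is the intended one. No genuine obstacle arises: all the work has already been done in the four preceding lemmas, and this statement is the bookkeeping step that packages them for the induction in the proof of Theorem~\ref{thm:autocone}.
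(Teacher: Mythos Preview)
Your argument is correct and follows the same route as the paper: injectivity of $\chi_H$ via Lemma~\ref{lem:restriction:kernel:eigenvalue:1:identity}, bounded order of the image via Lemma~\ref{lem:restriction:kernel:eigenvalue:root}, then Lemma~\ref{lem:fixing:hyperplane:non:trivial} for the dimension inequality. One small remark: the identification of $\mathrm{Ker}\,r_H \cap G_n^X$ with the kernel of $G_n^X \cap G_n^H \to G_{n-1}^{X\cap H}$ is automatic from the definitions and does not actually require $X\cap H$ to be non-degenerate, so your appeal to Lemma~\ref{lem:nondegenerate:cutout} here is unnecessary (though harmless).
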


\begin{proof}
By Lemma \ref{lem:restriction:kernel:eigenvalue:1:identity}, the map $\chi_H: \mathrm{Ker}\, r_H  \cap G_n^X  \to \mathbb{C}^*$ is injective, while Lemma \ref{lem:restriction:kernel:eigenvalue:root} implies that its image has bounded order, hence $\mathrm{Ker}\, (r_H) \cap G_n^X$ is finite.
By Lemma \ref{lem:fixing:hyperplane:non:trivial}, we obtain that  $$\dim\, G_n^X \leq \dim G_{n-1}^{X\cap H} + n.$$

\end{proof}

Now we can conclude Theorem \ref{thm:auto}.

\begin{proof}[{Proof of Theorem \ref{thm:auto}}]

By Lemma \ref{lem:nondegenerate:cutout}, we can repeatedly apply Lemma \ref{lem:restriction:kernel:finite:set} to get
$$
\dim G_n^X \leq \dim G_{n-1}^{X\cap H} + n \leq \cdots \leq \dim G_{c+1}^{X \cap H^{n-c-1}} + n+(n-1)+\cdots + (c+2)
$$

Let $C:=X \cap H^{n-c-1} \subset \BP^{c+1}$ be a general curve section of $X$.  As $C \subset \BP^{c+1}$ is non-degenerate by Lemma \ref{lem:nondegenerate:cutout}, we have
an inclusion $G_{c+1}^{C} \subset {\rm Aut}(C)$, while the latter has dimension at most 3. This gives that
$$\dim G_n^X \leq 3+ \sum_{j=c+2}^n j = \frac{n(n+1)}{2} - \frac{(c-1)(c+4)}{2}, $$
which proves (a).

For (b), if $\dim G_n^X = \frac{n(n+1)}{2}$, then $c=1$ by (a), i.e. $X \subset \BP^n$ is a hypersurface.  As $X$ is not a cone, it must be smooth if it is quadratic. Therefore it remains to show that  if $X$ is a hypersurface of degree at least $3$, then  $\dim G_n^X \leq  \frac{n(n+1)}{2}-1$.
We prove it by induction on the dimension of $X.$
When $\dim X=1$, pick a general line $H$ in $\p^2$. By Lemma \ref{lem:restriction:kernel:finite:set} , we have
 $$\dim G_2^X \leq \dim G_{1}^{X\cap H} + 2.$$
In this case $X\cap H$ is a set of $\deg X \geq 3$ points,  
hence  $\dim G_{1}^{X\cap H}=0$, it follows that $\dim G_2^X \leq 2=\frac{2(2+1)}{2}-1.$
When  $\dim X=n-1\geq 2$, pick a general hyperplane $H$ in $\p^n$. By Lemma \ref{lem:restriction:kernel:finite:set} , we have
 $$\dim G_n^X \leq \dim G_{n-1}^{X\cap H} + n.$$
Then by induction hypothesis we have
$$\dim G_n^X \leq \dim G_{n-1}^{X\cap H} + n \leq  \frac{n(n-1)}{2}-1+n=\frac{n(n+1)}{2}-1.$$

For (c),  if  we assume  further that $X$ is a  smooth hypersurface  of degree greater than $2$, then $\dim G_n^X =0$ (see for example Theorem 1.2 \cite{Poo}).  As a consequence, if  $X$ is smooth and is not a quadratic hypersurface, then $\dim G_n^X \leq \frac{n(n+1)}{2} -3.$
This completes the proof of the theorem.
\end{proof}

\proof[Proof of Theorem \ref{thm:autocone}]
If $C_X=\emptyset$, then we conclude the proof by Theorem \ref{thm:auto}.
Now assume that $C_X\neq \emptyset$.
By Proposition 1.3.3 \cite{Rus},  it  is a linear subspace. For simplicity, we set $r=r_X=\dim C_X$.

Pick a coordinates system of $\p^n$ such that $C_X$ is defined by $x_{0}=\dots=x_{n-r-1}=0.$  Let  $V$ be the subspace of $\p^n$ defined by $x_{n-r}=\cdots =x_{n}=0$ and we identify it with $\p^{n-r-1}$. We let $\pi:\p^n\setminus C_X \to V \cong \p^{n-r-1}$ be the projection
$$ \pi: [x_0:\dots:x_n] \mapsto [x_0:\dots:x_{n-r-1}:0:\cdots :0].$$
Denote the image $\pi(X\setminus C_X)$ by $Y$. Then we have $X\setminus C_X=\pi^{-1}(Y),$ and $Y=X \cap V$. Moreover, $Y$ is not a cone and it is non-degenerate in $\p^{n-r-1}$.
Since $C_X$ is preserved by $G_n^X$,  we see that  $G_n^X \subseteq G_n^{C_X}$.

Each element $g$ in $G_n^{C_X}$ can be represented by a matrix of the shape
$$\left(
\begin{array}{c|c}
 A & {0} \\
 \hline
 B& C
\end{array}
\right)
$$
such that $A$ and $C$ are square matrices  of  dimension  $n-r$  and $r+1$ respectively. We may now define an action of $G_n^{C_X}$ on $V$ as follows.  For each $$y = [a_0:\cdots : a_{n-r-1}:0: \cdots :0] \in V,$$  the new action $g*y$ of $g$ on $y$ is defined  as
$$g*y= \pi (g.y),$$
where $g.y$ represents the standard action of $G_n$ on $\p^n$.
With  the representative above, this action is just defined as $$g*[a_0:\cdots : a_{n-r-1}:0: \cdots :0] = [b_0:\cdots : b_{n-r-1}:0:\cdots :0],$$ where
$$\left( \begin{matrix} b_0\\ b_1\\ \vdots \\ b_{n-r-1} \end{matrix}  \right)  = A \left( \begin{matrix} a_0\\ a_1\\ \vdots \\ a_{n-r-1} \end{matrix}  \right).$$
Thanks to this action, and by identifying $V$ with $\p^{n-r-1}$, we obtain a group morphism $\rho:G_n^{C_X} \to G_{n-r-1}$.

On the one hand, we note that an element $g\in G_n^{C_X}$ belongs to $G_n^X$ if and only if $\rho(g) \in G_{n-r-1}^{Y}$.
On the other hand, an element $g\in G_n^{C_X}$ belongs to $\Ker \rho$ if and only if it can be represented by a matrix of the shape
$$\left(
\begin{array}{c|c}
\rm Id & {0} \\
 \hline
 B& C
\end{array}
\right)
$$
Hence $\dim \Ker \rho = (r+1)(n+1)$. As we can see that $\Ker \rho \subseteq G_n^X$, we obtain that
$$\dim G_n^X = \dim G_{n-r-1}^Y + (r+1)(n+1).$$
Finally,  by Theorem \ref{thm:auto}, we get $$\dim G_n^X \leq \frac{(n-r-1)(n-r)}{2} - \frac{(c-1)(c+4)}{2}+(r+1)(n+1).$$
\endproof

\section{Proof of the main theorem}

\begin{definition}\label{d.VMRT}
Let $X$ be a uniruled projective manifold. An irreducible
component $\sK$ of the space of rational curves
on $X$ is called {\em a minimal rational component} if the
subscheme $\sK_x$ of $\sK$ parameterizing curves passing through
a general point $x \in X$ is non-empty and proper. Curves
parameterized by $\sK$ will be called {\em minimal rational
curves}. Let $\rho: \sU \to \sK$ be the universal family and $\mu:
\sU \to X$ the evaluation map. The tangent map $\tau: \sU
\dasharrow \BP T(X)$ is defined by $\tau(u) = [T_{\mu(u)}
(\mu(\rho^{-1} \rho(u)))] \in \BP T_{\mu(u)}(X)$. The closure $\sC
\subset \BP T(X)$ of its image is the {\em VMRT-structure} on $X$. The
natural projection $\sC \to X$ is a proper surjective morphism and
a general fiber $\sC_x \subset \BP T_x(X)$ is called the VMRT  at the
point $x \in X$.
The VMRT-structure $\sC$ is  {\em locally
flat} if for a general $x \in X$,  there exists  an analytical open subset $U$  of $X$ containing $x$ with an open immersion
  $\phi: U \to
 \C^n, n= \dim X,$ and a projective subvariety $Y \subset \BP^{n-1}$
with $\dim Y = \dim \sC_x$ such that $\phi_*: \BP T(U) \to \BP
T(\C^n)$ maps $\sC|_{U}$ into the trivial fiber subbundle $\C^n
\times Y$ of the trivial projective bundle $\BP T(\C^n) = \C^n \times \BP^{n-1}.$
\end{definition}

\begin{examples} \label{e.IHSS}
An irreducible Hermitian symmetric space of compact type is a
homogeneous space $M= G/P$ with a simple Lie group $G$ and a
maximal parabolic subgroup $P$ such that the isotropy
representation of $P$ on $T_x(M)$ at a base point $x \in M$ is
irreducible.
 The highest weight orbit of the isotropy action on $\BP T_x(M)$
is exactly the VMRT at $x$.  The following table (e.g. Section 3.1 \cite{FH1}) collects basic information on these varieties.

\begin{center}
\begin{tabular}{|c| c| c| c| c| c| }
\hline Type & I.H.S.S. $M$   &  VMRT  $S$ &  $S \subset \BP T_x(M)$  & $\dim \aut(M)$  & $\dim \aut(S)$ \\
\hline I    &  $ \Gr(a, a+b) $ & $\pit^{a-1} \times \pit^{b-1}$ & Segre & $(a+b)^2-1$ & $a^2+b^2-2$ \\
\hline II  & $\mathbb{S}_{r}$ & $\Gr(2, r)$  & Pl\"ucker & $r(2r-1)$ & $r^2-1$  \\
\hline III & $ \Lag(2r)$ & $\pit^{r-1}$ & Veronese  & $r(2r+1)$ & $r^2-1$ \\
\hline IV & $\Q^r$ & $\Q^{r-2}$ & Hyperquadric  &  $(r+1)(r+2)/2$ & $(r-1)r/2$ \\
\hline V & $\mathbb{O}\pit^2$ & $\mathbb{S}_{5}$ & Spinor  &78  & 45  \\
\hline VI & $E_7/(E_6 \times U(1)) $ & $\mathbb{O}\pit^2$ & Severi & 133   & 78 \\
\hline
\end{tabular}
\end{center}

\end{examples}

\begin{lemma}\label{l.IHSS}
Let $M$ be an IHSS of dimension $n$ different from projective spaces and $S \subset \BP^{n-1}$ its VMRT at a general point.
Then
\begin{itemize}
\item[(1)] $\dim \aut(M) \leq \frac{n(n+1)}{2}$  unless $S \subset \BP^{n-1}$ is projectively equivalent to
the Segre embedding of $\BP^1 \times \BP^2$ or the natural embedding of $\Q^{n-2} \subset \BP^{n-1}$.
\item[(2)] The equality holds if and only if $S \subset \BP^{n-1}$ is projectively equivalent to the second Veronese embedding of $\BP^2$.
\end{itemize}
\end{lemma}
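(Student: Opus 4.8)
The plan is to run through the classification of irreducible Hermitian symmetric spaces of compact type, using the data tabulated in Examples~\ref{e.IHSS}. Since $M$ is not a projective space, it is one of: $\Gr(a,a+b)$ with $2\le a\le b$ (Type~I), the spinor variety $\mathbb{S}_r$ with $r\ge 4$ (Type~II; recall $\mathbb{S}_3\cong\BP^3$), the Lagrangian Grassmannian $\Lag(2r)$ with $r\ge 2$ (Type~III), the hyperquadric $\Q^r$ with $r\ge 3$ (Type~IV), the Cayley plane $\mathbb{O}\BP^2$ (Type~V), or $E_7/(E_6\times U(1))$ (Type~VI). For each family the table records $n=\dim M$, the integer $\dim\aut(M)$, and the projective equivalence class of the VMRT $S\subset\BP^{n-1}$, so I would reduce the lemma to comparing $\dim\aut(M)$ with $\frac{n(n+1)}{2}$ in each family and reading off $S$ in the finitely many cases where strict inequality fails.

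I would first dispose of the sporadic and quadric types. For Type~V, $n=16$ and $\dim\aut(M)=78<136$; for Type~VI, $n=27$ and $\dim\aut(M)=133<378$. For Type~IV, $n=r$ and $\dim\aut(M)=\frac{(n+1)(n+2)}{2}>\frac{n(n+1)}{2}$ for every $n$, and here $S=\Q^{n-2}\subset\BP^{n-1}$ is the natural hyperquadric embedding; this produces the ``$\Q^{n-2}$'' exceptions in~(1).

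The main work lies in the three remaining infinite families, each handled by an elementary estimate in the defining parameter. For $\Gr(2,b+2)$ one has $n=2b$ and $\frac{n(n+1)}{2}-\dim\aut(M)=b^2-3b-3$, which is positive exactly for $b\ge 4$; the failures $b=2,3$ are $\Gr(2,4)\cong\Q^4$ (VMRT $\Q^2\subset\BP^3$, the natural embedding) and $\Gr(2,5)$ (VMRT the Segre embedding $\BP^1\times\BP^2\subset\BP^5$). For $\Gr(a,a+b)$ with $3\le a\le b$ one has $n=ab\ge 9$ and $a+b\le 3+n/3$, hence $\dim\aut(M)=(a+b)^2-1\le 8+2n+n^2/9<\frac{n(n+1)}{2}$. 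For $\mathbb{S}_r$ one has $n=\binom{r}{2}$ and $\dim\aut(M)=2r^2-r$, and $\frac{n^2}{2}=\frac{r^2(r-1)^2}{8}\ge 2r^2$ for $r\ge 5$, giving strict inequality there; the remaining case $r=4$ is $\mathbb{S}_4\cong\Q^6$, already on the list. For $\Lag(2r)$ one has $n=\binom{r+1}{2}$ and $\dim\aut(M)=2r^2+r$, and $\frac{n^2}{2}=\frac{r^2(r+1)^2}{8}\ge 3r^2$ for $r\ge 4$, giving strict inequality; the case $r=2$ is $\Lag(4)\cong\Q^3$, while $r=3$ is $\Lag(6)$, where $n=6$, $\dim\aut(M)=21=\frac{6\cdot 7}{2}$, and $S$ is the second Veronese embedding of $\BP^2$ in $\BP^5$.

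Assembling these computations gives (1): the strict inequality fails precisely for the hyperquadrics and for $\Gr(2,5)$, whose VMRTs are the natural embedding of $\Q^{n-2}$ and the Segre embedding of $\BP^1\times\BP^2$. For (2), the same tally shows that among IHSS the equality $\dim\aut(M)=\frac{n(n+1)}{2}$ holds only for $\Lag(6)$, whose VMRT is the second Veronese surface; conversely, if $S\subset\BP^{n-1}$ is projectively equivalent to the second Veronese embedding of $\BP^2$ then $\BP^{n-1}=\BP^5$, so $n=6$, and among the $6$-dimensional IHSS, namely $\Gr(2,5)$, $\Q^6$ and $\Lag(6)$, only $\Lag(6)$ has this VMRT, since the threefold $\BP^1\times\BP^2$ and the fourfold $\Q^4$ are not projectively equivalent to a surface. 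I expect the only delicate point to be bookkeeping the exceptional low-rank isomorphisms $\Gr(2,4)\cong\Q^4$, $\mathbb{S}_4\cong\Q^6$, $\Lag(4)\cong\Q^3$ and $\mathbb{S}_3\cong\BP^3$, so that the exceptional cases are counted exactly once and the list of VMRT types in the statement comes out as claimed; the polynomial inequalities themselves are routine.
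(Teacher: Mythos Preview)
Your proof is correct and follows essentially the same approach as the paper's: a case-by-case run through the classification in Examples~\ref{e.IHSS}, comparing $\dim\aut(M)$ with $\tfrac{n(n+1)}{2}$ and keeping track of the low-rank coincidences $\Gr(2,4)\cong\Q^4$, $\mathbb{S}_4\cong\Q^6$, $\Lag(4)\cong\Q^3$. The only differences are cosmetic---the paper handles Type~I uniformly via the equivalence $(a+b)^2-1\ge\tfrac{ab(ab+1)}{2}\Leftrightarrow 3ab+2\ge(a^2-2)(b^2-2)$, whereas you split into $a=2$ and $a\ge3$; and you spell out the converse in~(2) by listing the $6$-dimensional IHSS, which the paper leaves implicit in the exhaustive tally.
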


\begin{proof}
For Type (I), we have $M={\rm Gr}(a, a+b)$,  $n=\dim M=ab$ and $\dim \aut(M) = (a+b)^2-1$. As $M$ is not a projective space,  we may assume  $b \geq a \geq 2$.
Then the inequality $(a+b)^2-1 \geq \frac{ab(ab+1)}{2}$ is equivalent to $3ab+2 \geq  (a^2-2)(b^2-2)$, which holds if and only if $(a, b)=(2, 2)$ or $(2, 3)$.
In both cases, the inequality is strict.

For Type (II), we have  $M=\BS_r$,  $n=\dim M= r(r-1)/2$ and $\dim \aut(M) = r(2r-1)$.  We may assume $r \geq 5$ as $\BS_4 \simeq \Q^6$. Then one checks that $\dim \aut(M) < \frac{n(n+1)}{2}$.

For Type (III), we have  $M=\rm{Lag}(2r)$,  $n=\dim M= r(r+1)/2$ and $\dim \aut(M) = r(2r+1)$. We may assume $r \geq 3$ as  $\rm{Lag}(4) \simeq \Q^3$. Then one checks that $\dim \aut(M) \leq \frac{n(n+1)}{2}$, with equality if and only if $r=3$. In this case, $S \subset \BP^{5}$ is the second Veronese embedding of $\BP^2$.

For type (IV), we have $M=\Q^r$ and $\dim \aut(M) = (r+1)(r+2)/2$, which does not satisfy $\dim \aut(M) \leq \frac{r(r+1)}{2}$.

For types (V) and (VI),  it is obvious that  $\dim \aut(M) \leq \frac{n(n+1)}{2}$.
\end{proof}

\begin{definition} \label{d.prolong}
Let $V$ be  a complex vector space and $\fg \subset {\rm End}(V)$
a Lie subalgebra. The {\em $k$-th prolongation} (denoted by
$\fg^{(k)}$) of $\fg$ is the space of symmetric multi-linear
homomorphisms $A: \Sym^{k+1}V \to V$ such that for any fixed $v_1,
\cdots, v_k \in V$, the endomorphism $A_{v_1, \ldots, v_k}: V \to
V$ defined by $$v\in V \mapsto A_{v_1, \ldots, v_k, v} := A(v,
v_1, \cdots, v_k) \in V$$ is in $\fg$. In other words, $\fg^{(k)}
= \Hom(\Sym^{k+1}V, V) \cap \Hom(\Sym^kV, \fg)$.
\end{definition}

It is shown in \cite{HM} that for a smooth non-degenerate variety $C \subsetneq \BP^{n-1}$, the second prolongation satisfies $\aut(\hat{C})^{(2)}=0$.

\begin{examples} \label{e.SymGr}
Fix two integers $k \geq 2, m\geq 1$.
Let $\Sigma$ be an $(m+2k)$-dimensional vector space endowed with a
skew-symmetric 2-form $\omega$ of maximal rank. The symplectic Grassmannian
$M=\Gr_\omega(k, \Sigma)$ is the variety of all $k$-dimensional
isotropic subspaces of $\Sigma$, which is not homogeneous if $m$ is odd.
  Let $W$ and $Q$ be vector spaces of
dimensions $k \geq 2$ and $m$ respectively.
Let $\mathbf{t}$ be the tautological line
bundle over $\pit W$.
The VMRT $\mathcal{C}_x \subset \pit T_x(M)$ of $\Gr_\omega(k, \Sigma)$
at a general point is isomorphic to the projective
bundle $\pit((Q \otimes \mathbf{t}) \oplus \mathbf{t}^{\otimes
2})$ over $\pit W$ with the projective embedding given by
the complete linear system $$H^0(\pit W, (Q \otimes \mathbf{t}^*)
\oplus (\mathbf{t}^*)^{\otimes 2}) = (W \otimes Q)^* \oplus \SYM^2
W^*.$$
By Proposition 3.8 \cite{FH1}, we have $\aut(\hat{\mathcal{C}}_x) \simeq (W^* \otimes Q) \sd (\mathfrak{gl}(W) \oplus \mathfrak{gl}(Q))$ and $\aut(\hat{\mathcal{C}}_x)^{(1)} \simeq {\rm Sym}^2 W^*$.
This gives that  $$\dim \aut(\hat{\mathcal{C}}_x) = m^2+k^2+km \mbox{ and } \dim \aut(\hat{\mathcal{C}}_x)^{(1)} = k(k+1)/2.$$
\end{examples}

We denote by $\aut(\mathcal{C}, x)$ the Lie algebra of infinitesimal automorphisms of $\mathcal{C}$, which consists of germs of vector fields whose local flow preserves $\mathcal{C}$ near $x$.
Note that the action of ${\rm Aut}^0(X)$ on $X$ sends minimal rational curves to minimal rational curves, hence it preserves the VMRT structure, which gives a natural inclusion
$\aut(X) \subset \aut(\mathcal{C}, x)$ for $x\in X$ general.

The following result is a combination of Propositions 5.10, 5.12, 5.14 and 6.13 in \cite{FH1}.

\begin{proposition} \label{p.prolong}
Let $X$ be an $n$-dimensional smooth Fano variety of Picard number one. Assume that the  VMRT   $\mathcal{C}_x$ at a general point $x\in X$ is smooth irreducible and  non-degenerate. Then
$$
\dim \mathfrak{aut}(X) \leq n + \dim \mathfrak{aut}(\hat{\mathcal{C}_x})+\dim \mathfrak{aut}(\hat{\mathcal{C}_x})^{(1)}.
$$
The equality holds if and only if the VMRT structure $\mathcal{C}$ is locally flat,  or equivalently if and only if $X$ is an equivariant compactification of $\C^n$.
\end{proposition}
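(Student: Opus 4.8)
The plan is to bound $\dim\aut(\mathcal{C},x)$ at a general point $x\in X$ and then use the inclusion $\aut(X)\subset\aut(\mathcal{C},x)$ recorded above. So fix a general $x$, at which $\mathcal{C}_x\subsetneq\BP T_x(X)$ is smooth, irreducible and non-degenerate, choose holomorphic coordinates centered at $x$ identifying a neighbourhood with an open subset of $V:=T_x(X)$, and filter $\aut(\mathcal{C},x)$ by order of vanishing at $x$: let $F_k\subset\aut(\mathcal{C},x)$ be the germs of $\mathcal{C}$-preserving vector fields vanishing to order $\geq k$ at $x$, so that $\aut(\mathcal{C},x)=F_0\supseteq F_1\supseteq F_2\supseteq\cdots$. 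Evaluation at $x$ embeds $F_0/F_1$ into $V$, giving $\dim F_0/F_1\leq n$, so everything reduces to controlling the graded pieces $F_k/F_{k+1}$ for $k\geq1$.

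The key step is to identify these pieces with prolongations. Given $v\in F_k$, $k\geq1$, write its Taylor expansion $v=v_k+v_{k+1}+\cdots$ with $v_m\in\Hom(\Sym^m V,V)$ the degree-$m$ homogeneous part; I would expand the condition ``the local flow of $v$ preserves $\mathcal{C}$'' order by order and, bracketing against the Euler field and against constant fields, show that the leading term $v_k$ lies in $\aut(\hat{\mathcal{C}}_x)^{(k-1)}=\Hom(\Sym^{k}V,V)\cap\Hom(\Sym^{k-1}V,\aut(\hat{\mathcal{C}}_x))$. For $k=1$ this just says that the differential at $x$ of the flow stabilizes the affine cone $\hat{\mathcal{C}}_x$, i.e.\ $v_1\in\aut(\hat{\mathcal{C}}_x)$; for $k=2$ it gives $v_2\in\aut(\hat{\mathcal{C}}_x)^{(1)}$. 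This yields injections $F_1/F_2\hookrightarrow\aut(\hat{\mathcal{C}}_x)$, $F_2/F_3\hookrightarrow\aut(\hat{\mathcal{C}}_x)^{(1)}$, and in general $F_k/F_{k+1}\hookrightarrow\aut(\hat{\mathcal{C}}_x)^{(k-1)}$. Since $\mathcal{C}_x$ is smooth, non-degenerate and a proper subvariety of $\BP T_x(X)$, the vanishing $\aut(\hat{\mathcal{C}}_x)^{(2)}=0$ recalled above, combined with the elementary implication $\fg^{(k)}=0\Rightarrow\fg^{(k+1)}=0$, forces $\aut(\hat{\mathcal{C}}_x)^{(k)}=0$ for all $k\geq2$; plugging this into the injections for $k\geq3$ shows that any $\mathcal{C}$-preserving germ vanishing to order $\geq3$ has all Taylor coefficients in vanishing prolongations, hence is identically zero, i.e.\ $F_3=0$. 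Summing the three surviving graded dimensions gives
\[
\dim\aut(X)\leq\dim\aut(\mathcal{C},x)=\dim F_0/F_1+\dim F_1/F_2+\dim F_2/F_3\leq n+\dim\aut(\hat{\mathcal{C}}_x)+\dim\aut(\hat{\mathcal{C}}_x)^{(1)}.
\]

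For the equality statement, I would argue that equality forces each of the three inclusions to be an equality and also $\aut(X)=\aut(\mathcal{C},x)$, i.e.\ the isotropy algebra of the $G$-structure defined by $\mathcal{C}$ attains its maximal possible size at every order along a general point. The curvature of the canonical Cartan connection attached to $\mathcal{C}$ takes values in a cohomology group built out of $\aut(\hat{\mathcal{C}}_x)$ and its prolongations, and maximality of the symbol forces this curvature to vanish; hence the VMRT structure $\mathcal{C}$ is locally flat. Conversely, for a locally flat structure a general point has a neighbourhood looking like the flat model $\C^n\times Y$ with $Y=\mathcal{C}_x$, whose germs of infinitesimal automorphisms are exactly the constant fields, the linear fields in $\aut(\hat Y)$ and the quadratic fields in $\aut(\hat Y)^{(1)}$ (nothing of higher order, again by $\aut(\hat Y)^{(2)}=0$); and on a Fano manifold of Picard number one local flatness of the VMRT structure implies, via the Cartan--Fubini extension theorem of Hwang--Mok and the results of Mok, that $X$ is an equivariant compactification of $\C^n$, so all these germs are restrictions of global vector fields and equality holds. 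Finally, being an equivariant compactification of $\C^n$ conversely forces the VMRT structure to be locally flat, which closes the chain of equivalences.

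The hard part will be the second step: turning the non-linear geometric condition ``the flow preserves $\mathcal{C}\subset\BP T(X)$'' into the purely algebraic membership $v_k\in\aut(\hat{\mathcal{C}}_x)^{(k-1)}$ requires a careful order-by-order analysis of prolonging partial automorphisms of the underlying $G$-structure, which is exactly where the Tanaka/Cartan machinery enters; and in the equality case the genuinely deep input is the equivalence between local flatness of the VMRT structure and $X$ being an equivariant compactification of $\C^n$, which relies on Hwang--Mok's extension and deformation theory rather than on any elementary argument.
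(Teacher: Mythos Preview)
The paper does not give a proof of this proposition; it simply records it as a combination of Propositions~5.10, 5.12, 5.14 and~6.13 of \cite{FH1}. Your sketch is exactly the argument carried out in those references: filter germs of $\mathcal{C}$-preserving vector fields by vanishing order at $x$, embed the graded pieces $F_k/F_{k+1}$ into the successive prolongations $\fg^{(k-1)}$ of $\fg=\aut(\hat{\mathcal{C}}_x)$, and use $\fg^{(2)}=0$ to truncate the filtration at $F_3$. So your approach coincides with the source being cited.

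Two small points worth tightening. First, you rightly flag that the embedding $F_k/F_{k+1}\hookrightarrow\fg^{(k-1)}$ for $k\geq 2$ is the technical heart; note that it genuinely requires working on the frame bundle of the associated $G$-structure, so that nearby fibres $\mathcal{C}_y$ are compared to $\mathcal{C}_x$ through adapted frames --- bracketing against constant and Euler fields at the single point $x$ alone does not quite produce the prolongation condition. Second, in the equality case your chain of implications is slightly tangled and the appeal to Mok's recognition results is unnecessary here. The clean route is: equality in the bound forces $\dim\aut(\mathcal{C},x)$ to be maximal, and the flatness criterion for finite-type $G$-structures (this is Prop.~5.14 of \cite{FH1}) then gives local flatness; conversely, local flatness produces the full $(n+\dim\fg+\dim\fg^{(1)})$-dimensional algebra of local symmetries, Cartan--Fubini extends each germ of the corresponding local flow to a global automorphism of $X$, hence $\dim\aut(X)$ attains the bound, and the $n$-dimensional abelian piece coming from $F_0/F_1\cong V$ exhibits $X$ as an equivariant compactification of $\C^n$; finally, any equivariant $\C^n$-compactification has translation-invariant, hence locally flat, VMRT structure on the open orbit.
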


We recall the following result from Theorem 7.5 \cite{FH2}.
\begin{theorem}\label{t.FH}
Let $S \subsetneq \BP V$ be an irreducible smooth non-degenerate variety such that $\aut(\widehat{S})^{(1)} \neq 0$. Then
  $S \subset \BP V$ is projectively equivalent to one in the following list.
\begin{itemize} \item[(1)] The VMRT of an irreducible Hermitian symmetric space of compact type  of rank $\geq 2$.
\item[(2)] The VMRT of a symplectic Grassmannian.
\item[(3)] A smooth linear section of $\Gr(2, 5) \subset \BP^9$ of codimension $\leq 2$.
\item[(4)] A $\BP^4$-general linear section of $\mathbb{S}_5 \subset \BP^{15}$ of codimension $\leq 3$.
\item[(5)] Biregular projections of (1) and (2) with nonzero prolongation, which are completely described in Section 4 of \cite{FH1}.
\end{itemize}  \end{theorem}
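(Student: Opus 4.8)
The plan is to argue in two stages: first reduce to the case where $S$ is linearly normal in $\BP V$, and then, for linearly normal $S$, extract from the hypothesis $\aut(\widehat S)^{(1)}\neq 0$ a finite-dimensional graded Lie algebra whose classification forces the list. For the reduction, suppose $S\subset\BP V$ is not linearly normal and set $\widetilde V=H^0(S,\mathcal O_S(1))^*$, so that $S$ is the image of its linearly normal model $\widetilde S\subset\BP\widetilde V$ under a linear projection. Since $S$ is smooth this projection is biregular on $\widetilde S$, and one checks that retaining a nonzero first prolongation is a very rigid condition on biregular projections; the admissible ones are exactly those described in Section~4 of \cite{FH1}, all of them projections of the homogeneous VMRTs of (1) and (2). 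That is precisely item~(5), so it remains to treat $S$ linearly normal.

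So assume $S\subset\BP V$ is smooth, irreducible, non-degenerate and linearly normal with $\aut(\widehat S)^{(1)}\neq 0$. Put $\mathfrak g_{-1}=V$, $\mathfrak g_0=\aut(\widehat S)$ and $\mathfrak g_1=\aut(\widehat S)^{(1)}$. Since $S$ is smooth and non-degenerate, $\aut(\widehat S)^{(2)}=0$ (as recalled in the excerpt), so $\mathfrak g:=\mathfrak g_{-1}\oplus\mathfrak g_0\oplus\mathfrak g_1$ is a finite-dimensional graded Lie algebra with $[\mathfrak g_{-1},\mathfrak g_{-1}]=[\mathfrak g_1,\mathfrak g_1]=0$, with $\mathfrak g_0$ acting faithfully on $V$ and containing the Euler field, and with $\widehat S$ linearly spanning $V$ by non-degeneracy. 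The core task is then to classify the pairs $(\mathfrak g_0,V)$ subject to $\mathfrak g_0^{(1)}=\mathfrak g_1\neq 0$. Taking a Levi decomposition $\mathfrak g_0=\mathfrak l\ltimes\mathfrak r$ and decomposing $V$ (via its socle filtration) into $\mathfrak l$-modules, one first disposes of the case where $V$ is $\mathfrak g_0$-irreducible: by Cartan's classification of irreducible linear Lie algebras with nonzero prolongation, in the Kobayashi--Nagano form, together with the constraint $S\subsetneq\BP V$, the pair $(\mathfrak g_0,V)$ must be, up to scalars, the isotropy representation of an irreducible Hermitian symmetric space of compact type of rank $\geq 2$, and then $\widehat S$ is forced to be its highest weight orbit, giving item~(1).

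The remaining, and genuinely hard, case is when $V$ is a reducible $\mathfrak g_0$-module --- precisely the situation of a symplectic Grassmannian VMRT or of a proper linear section, where $\aut(\widehat S)$ is strictly smaller than the automorphism algebra of the ambient homogeneous VMRT. Here I would run an inductive argument driven by the Jacobi identity: the bracket $\mathfrak g_1\times\mathfrak g_{-1}\to\mathfrak g_0$ turns $\mathfrak g_1\subset\Hom(\Sym^2 V,V)$ into a rigid ``quadratic gluing'' between the pieces of a socle decomposition $V=V_1\oplus\cdots$, and combining this with faithfulness, non-degeneracy, and $\mathfrak g_2=0$ pins down, for each admissible reductive $\mathfrak l$, the module $V$ and the space $\mathfrak g_1$. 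The only outcomes should be the symplectic Grassmannian VMRTs $\BP((Q\otimes\mathbf t)\oplus\mathbf t^{\otimes 2})$ over $\BP W$, with $\mathfrak l=\mathfrak{gl}(W)\oplus\mathfrak{gl}(Q)$ and $\mathfrak g_1=\Sym^2W^*$ as in Example~\ref{e.SymGr}, giving item~(2), together with two sporadic degenerate families --- the smooth linear sections of $\Gr(2,5)\subset\BP^9$ of codimension $\leq 2$ and the $\BP^4$-general linear sections of $\mathbb S_5\subset\BP^{15}$ of codimension $\leq 3$ --- giving items~(3) and (4). This is where I expect the main obstacle to lie: it requires a complete enumeration of the admissible Levi factors and their modules, the explicit linear algebra computing $\mathfrak g_1$ in each case, the geometric identification of every nonzero solution with a concrete projective orbit or linear section together with a smoothness check, and a matching against the vanishing of the second prolongation so that no spurious case survives.

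Finally, to close the classification one must verify the converse, namely that each variety on the list really has $\aut(\widehat S)^{(1)}\neq 0$: for (1) the full prolongation is the Lie algebra of the ambient Hermitian symmetric space; for (2) this is Proposition~3.8 of \cite{FH1}; for (3) and (4) it is a direct prolongation computation; and for (5) it holds by construction. I would also keep track of where the hypotheses enter: non-degeneracy guarantees that $\widehat S$ spans $V$, so that the bracket on $\mathfrak g$ is determined by $\mathfrak g_0$ and $\mathfrak g_1$, while smoothness is used both for $\aut(\widehat S)^{(2)}=0$, hence finiteness of $\mathfrak g$, and for identifying the relevant orbits and linear sections.
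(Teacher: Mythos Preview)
The paper does not prove this theorem; it is simply quoted as Theorem~7.5 of \cite{FH2}, with no argument supplied. There is therefore no proof in the present paper to compare your proposal against.

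That said, your outline is a reasonable high-level sketch of the strategy underlying the cited result: pass to the linearly normal model, form the graded Lie algebra $\mathfrak g_{-1}\oplus\mathfrak g_0\oplus\mathfrak g_1$, use the vanishing of $\aut(\widehat S)^{(2)}$ to ensure finiteness, invoke the Kobayashi--Nagano list in the $\mathfrak g_0$-irreducible case, and treat the reducible case separately. But as you yourself acknowledge, that last step is where essentially all the content lies, and your proposal only describes what the answer ought to be rather than giving an argument that produces it. Nothing in your sketch explains, for instance, why only $\Gr(2,5)$ and $\mathbb S_5$ generate sporadic linear sections, why the codimension bounds are $2$ and $3$ respectively, or why no further reducible configurations survive. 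The actual classification in \cite{FH1} and \cite{FH2} occupies a substantial portion of those papers and is not reproduced here; your proposal does not substitute for it. One further logical wrinkle: your reduction step presupposes that if a biregular projection of $\widetilde S$ has nonzero first prolongation then so does $\widetilde S$ itself, and that the admissible projections are already known from \cite{FH1}; but the latter description takes the classification of linearly normal models as input, so the two stages must be carried out in the opposite order from the one you present.
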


\begin{proposition} \label{p.main}
Let $S \subsetneq \BP V$ be an irreducible smooth non-degenerate variety such that $\aut(\widehat{S})^{(1)} \neq 0$. Let $n=\dim V$.
Then
\begin{itemize}
\item[(a)] we have $\dim \mathfrak{aut}(\hat{S}) + \dim \mathfrak{aut}(\hat{S})^{(1)} \leq \frac{n(n-1)}{2}$  unless $S \subset \BP V$ is projectively equivalent to
the Segre embedding of $\BP^1 \times \BP^2$ or the natural embedding of $\Q^{n-2} \subset \BP^{n-1} (n\geq 3)$ .
\item[(b)] The equality holds if and only if $S \subset \BP V$ is projectively equivalent to the second Veronese embedding of $\BP^2$ or a  general hyperplane section of the Segre embedding of $\BP^1 \times \BP^2$.
\end{itemize}

\end{proposition}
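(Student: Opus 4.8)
The plan is to run through the list in Theorem \ref{t.FH} case by case, in each case computing (or bounding) the quantity $N(S) := \dim \aut(\hat S) + \dim \aut(\hat S)^{(1)}$ and comparing it with $\frac{n(n-1)}{2}$ where $n = \dim V$. This is essentially the same bookkeeping as in Lemma \ref{l.IHSS}, but now carrying along the extra prolongation term and also dealing with the non-homogeneous symplectic Grassmannian VMRTs, the linear sections of $\Gr(2,5)$ and $\mathbb{S}_5$, and the biregular projections. The key point throughout is that the IHSS case already tells us $\dim \aut(\hat S) \le \frac{n(n-1)}{2}$ in all but finitely many exceptions, and moreover that for $S$ the VMRT of an IHSS the table in Examples \ref{e.IHSS} gives $\dim\aut(\hat S)$ explicitly, so one only needs to check the small exceptional list ($\BP^1\times\BP^1$, $\BP^1\times\BP^2$, $\Q^{r-2}$, $v_2(\BP^2)$, and the small rank Grassmannians/spinor varieties that got excluded earlier because they coincide with quadrics) against the sharp bound including the prolongation.

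Concretely, first I would dispose of case (1). For $S$ the VMRT of an IHSS of rank $\ge 2$, by Lemma \ref{l.IHSS} we have $\dim\aut(\hat S)\le \frac{n(n+1)}{2}$ with the listed exceptions, but that bound is one step too weak; instead I use the exact values. Since such $S$ is smooth non-degenerate in $\BP^{n-1}$, $\aut(\hat S)^{(1)}$ is the prolongation of the isotropy algebra, which for IHSS of rank $\ge 2$ is well understood: it is nonzero and in fact $\fg = \aut(\hat S)$ together with its prolongations reconstructs the graded Lie algebra of $\Aut(M)$, so $\dim\aut(\hat S) + \dim\aut(\hat S)^{(1)}$ equals $\dim\aut(M) - n$ (the negative plus zero graded pieces minus the degree-$0$ translations, i.e. $\dim M$). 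Hence $N(S) = \dim\aut(M) - n$ where $n = \dim M$, and the inequality $N(S)\le \frac{n(n-1)}{2}$ becomes $\dim\aut(M)\le \frac{n(n+1)}{2}$, which is exactly Lemma \ref{l.IHSS}; equality forces $S = v_2(\BP^2)$ (the VMRT of $\Lag(6)$), and the sole remaining ``large'' exceptions are $\BP^1\times\BP^2$ (VMRT of $\Gr(2,5)$) and $\Q^{n-2}$. For case (2), the symplectic Grassmannian, Examples \ref{e.SymGr} gives $n = \dim((W\otimes Q)^*\oplus\Sym^2 W^*) = km + \binom{k+1}{2}$ with $k\ge 2$, $m\ge 1$, and $N(S) = m^2 + k^2 + km + \binom{k+1}{2}$; I would verify directly that $N(S) < \frac{n(n-1)}{2}$ for all such $(k,m)$ (a finite check for small values plus a monotonicity argument for large $m$ or $k$). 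Cases (3) and (4) are linear sections: for a smooth codimension-$\le 2$ linear section $S$ of $\Gr(2,5)\subset\BP^9$, $n\in\{8,9,10\}$; a general hyperplane section has $\dim\aut(\hat S) = 7$ (this should follow from \cite{FH1}/\cite{FH2}, since such sections are among the varieties with nonzero prolongation there) and $\dim\aut(\hat S)^{(1)} = 1$, giving $N(S) = 8 = \frac{8\cdot 7}{2}\,/\,$... — here I must be careful to extract the precise $\aut$ and prolongation dimensions from \cite{FH1}, \cite{FH2}, and the equality case $\frac{n(n-1)}{2}$ is met precisely by the general hyperplane section of $\BP^1\times\BP^2$ (equivalently the codimension-one linear section of $\Gr(2,5)$, since $\BP^1\times\BP^2 = \Gr(2,4)$ restricted... actually $\BP^1\times\BP^2$ is itself a hyperplane section issue to be sorted). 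The spinor section $\mathbb{S}_5\subset\BP^{15}$ has $n$ around $13$–$16$ and $\dim\aut$ at most $45$, easily below $\binom{n-1}{2}$. Finally, case (5), the biregular projections, are explicitly enumerated in Section 4 of \cite{FH1} with their $\aut$ and prolongation dimensions tabulated, so this reduces to reading off the table and checking the inequality line by line.

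The main obstacle I anticipate is case (3) — the low-dimensional linear sections of $\Gr(2,5)$ and the identification of the equality case. The bound $\frac{n(n-1)}{2}$ is sharp there, so I cannot afford any slack: I need the exact value of $\dim\aut(\hat S)$ and of $\dim\aut(\hat S)^{(1)}$ for the general hyperplane section of $\Gr(2,5)$ (equivalently $\BP^1\times\BP^2$), and I must confirm these are attained only by the \emph{general} section, not special ones, and that codimension-$2$ sections fall strictly below. This requires invoking the fine classification in \cite{FH1} and \cite{FH2} rather than just the crude dimension count, and cross-checking that ``general hyperplane section of the Segre embedding of $\BP^1\times\BP^2$'' is precisely the projective-equivalence class that shows up. The remaining cases are routine inequality verifications of the type already done in Lemma \ref{l.IHSS}, so once the bookkeeping for (1), (2) and the projections is assembled, assembling (a) and (b) is mechanical: collect all $S$ with $N(S) > \frac{n(n-1)}{2}$ (these are exactly $\BP^1\times\BP^2$ Segre and $\Q^{n-2}$), and all $S$ with $N(S) = \frac{n(n-1)}{2}$ (exactly $v_2(\BP^2)$ and the general hyperplane section of $\BP^1\times\BP^2$), which is the statement.
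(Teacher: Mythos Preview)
Your overall plan---run through the five cases of Theorem \ref{t.FH} and compare $N(S)$ with $\frac{n(n-1)}{2}$---is exactly the paper's strategy, and your treatment of case (1) via $N(S)=\dim\aut(M)-n$ and Lemma \ref{l.IHSS} is correct. But the execution breaks down at cases (2) and (3), and in particular you have misplaced the second equality case.

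In case (2) you assert that $N(S)<\frac{n(n-1)}{2}$ for \emph{all} $(k,m)$ with $k\ge 2$, $m\ge 1$. This is false: for $(k,m)=(2,1)$ one has $n=km+\binom{k+1}{2}=5$ and $N(S)=m^2+k^2+km+\binom{k+1}{2}=1+4+2+3=10=\frac{5\cdot 4}{2}$. This is precisely the equality case ``general hyperplane section of the Segre $\BP^1\times\BP^2$'': by Lemma 3.6 of \cite{FH1}, the VMRT of the symplectic Grassmannian with $(k,m)=(2,1)$ is projectively equivalent to that hyperplane section (concretely, it is the cubic scroll $\mathbb{F}_1\subset\BP^4$). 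So the second equality case lives in case (2), not in case (3).

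Correspondingly, your case (3) is garbled. A hyperplane section of $\Gr(2,5)\subset\BP^9$ sits in $\BP^8$ (so $n=9$), and the correct values from \cite{FH1} are $\dim\aut(\hat S)=16$, $\dim\aut(\hat S)^{(1)}=5$, giving $N(S)=21<36=\frac{9\cdot 8}{2}$; the codimension-2 section has $\dim\aut(\hat S)=9$, $\dim\aut(\hat S)^{(1)}=1$ (by \cite{BFM}), so $N(S)=10<28$. Neither is an equality case. Your tentative identification ``$\BP^1\times\BP^2=\Gr(2,4)$'' is also wrong ($\Gr(2,4)$ is a 4-dimensional quadric). Finally, case (5) is not just a table lookup: the paper handles it via a separate Proposition \ref{p.projection}, which bounds $\dim\aut(\widehat{p_L(S)})^{(1)}$ using the rank constraints on $L$ coming from the biregularity of the projection, and this requires the explicit descriptions in Propositions 4.10--4.18 of \cite{FH1}.
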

\begin{proof}

Consider case (1) in Theorem \ref{t.FH}. Let $M$ be an IHSS and $S \subset \BP V$ its VMRT at a general point. As the VMRT structure is locally flat, we have
$\dim \aut(\hat{S}) + \dim \aut(\hat{S})^{(1)} = \dim \aut(M) - \dim M = \dim \aut(M) - n$ by Proposition \ref{p.prolong}.  Now the claim follows from Lemma \ref{l.IHSS}.

Consider case (2) in Theorem \ref{t.FH}. By Example \ref{e.SymGr}, we have $$\dim \aut(\hat{S}) + \dim \aut(\hat{S})^{(1)} = m^2+k^2+km +  k(k+1)/2$$ with $k \geq 2$ and $n = km+  k(k+1)/2$.
Note that $n \geq km+3$. Assume first that $m \geq 2$, then  we have $m^2+k^2 < (km+3)km/2 \leq n(n-3)/2$, which gives the claim.
 Now assume $m=1$, then it is easy to check that
$1+k^2 \leq n(n-3)/2$ with equality if and only if $(k, m)= (2,1)$.  By Lemma 3.6 \cite{FH1}, this implies that $S \subset \BP V$ is projectively equivalent to a general hyperplane section of the Segre embedding of $\BP^1 \times \BP^2$.

Consider case (3) in Theorem \ref{t.FH}.  If $S$ is the hyperplane section of ${\rm Gr}(2, 5)$, then we have $\dim \aut(\hat{S}) = 16$ and $\dim \aut(\hat{S})^{(1)} = 5$ by Propositions 3.11 and  3.12 in  \cite{FH1}.
 Now assume that $S$ is a codimension 2 linear section of ${\rm Gr}(2, 5)$, then $\dim \aut(\hat{S}) = 9$ and $\dim \aut(\hat{S})^{(1)} = 1$ by Lemma 4.6 \cite{BFM}.  The claim follows immediately.

 Consider case (4) in Theorem \ref{t.FH}. If $S$ is the hyperplane section of $\BS_5$, then $\dim \aut(\hat{S}) = 31$ and $\dim \aut(\hat{S})^{(1)} = 7$ by Propositions 3.9 and  3.10 in  \cite{FH1}.
Now assume that $S_k$ is a  $\BP^4$-general linear section of $\mathbb{S}_5$ of codimension $k=2, 3$. By Propositions 4.7 and 4.11 \cite{BFM}, we have  $\dim \aut(\hat{S_2}) = 19$ and $\dim \aut(\hat{S_3}) = 12$.
By Theorem 1.1.3 \cite{HM}, we have $\dim \aut(\hat{S})^{(1)} \leq \dim V^*$, hence  $\dim \aut(\hat{S_2})^{(1)}  \leq 14$.  By a similar argument as Lemma 4.6 \cite{BFM}, we have $\dim \aut(\hat{S_3})^{(1)} =1$. Now the claim follows immediately.

The case (5) follows from Proposition \ref{p.projection}.
\end{proof}

\begin{proposition} \label{p.projection}
Let $S \subsetneq \BP V$ be an irreducible linearly-normal non-degenerate smooth variety such that $\aut(\widehat{S})^{(1)} \neq 0$. Let $L \subset \BP V$ be a linear subspace such that
the linear projection $p_L: \BP V \dasharrow \BP (V/L)$ maps $S$ isomorphically to $p_L(S)$. Assume that $\aut(p_L(S))^{(1)} \neq 0$. Then
$\dim \mathfrak{aut}(\widehat{p_L(S)}) + \dim \mathfrak{aut}(\widehat{p_L(S)})^{(1)} < \frac{\ell (\ell -1)}{2}$, where $\ell =\dim (V/L)$.
\end{proposition}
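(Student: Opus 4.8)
The plan is to leverage the linear normality of $S$ to pin down $T:=p_L(S)$, then to control $\dim\aut(\widehat T)$ through Theorem~\ref{thm:auto}, and finally to bound $\dim\aut(\widehat T)^{(1)}$ using the list behind case~(5) of Theorem~\ref{t.FH}.

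Write $W:=V/L$, so that $\BP W=\BP(V/L)$ and $\dim W=\ell$; since the projection is genuine we have $\ell<\dim V$. The variety $T\cong S$ is smooth, irreducible and non-degenerate in $\BP W$. The key observation is that $T$ is \emph{not} linearly normal: linear normality of $S$ means the restriction $V^{*}\to H^{0}(S,\mathcal O_{S}(1))$ is an isomorphism, and the isomorphism $p_{L}|_{S}\colon S\to T$ identifies $\mathcal O_{T}(1)$ with $\mathcal O_{S}(1)$, so the embedding $T\subset\BP W$ is given by the subspace $W^{*}\subsetneq V^{*}=H^{0}(T,\mathcal O_{T}(1))$, which is proper because $\dim W^{*}=\ell<\dim V$. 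As every hypersurface in $\BP^{\ell-1}$ (with $\ell\ge 3$) is linearly normal, we conclude $c:=\codim T\ge 2$; in particular $T$ is not a smooth quadric hypersurface, and, being smooth and non-degenerate, it is not a cone (a smooth cone is a linear subspace, hence of codimension $0$).

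Now apply Theorem~\ref{thm:auto}(a) to $T\subsetneq\BP^{\ell-1}$ with $n=\ell-1$ and codimension $c\ge 2$. Using $\dim\aut(\widehat T)=\dim G_{\ell-1}^{T}+1$ (the natural map $\aut(\widehat T)\to\mathrm{Lie}\,G_{\ell-1}^{T}$ has kernel the line of scalar endomorphisms and is surjective, since any lift to $\mathrm{End}(W)$ of an element of $\mathrm{Lie}\,G_{\ell-1}^{T}$ preserves the cone $\widehat T$ as $\widehat T$ contains its radial directions), we obtain
$$\dim\aut(\widehat T)\ \le\ \frac{\ell(\ell-1)}{2}-\frac{(c-1)(c+4)}{2}+1\ \le\ \frac{\ell(\ell-1)}{2}-2 .$$
Hence it suffices to prove $\dim\aut(\widehat T)^{(1)}\le\frac{(c-1)(c+4)}{2}-2$; for $c=2$ this asks $\dim\aut(\widehat T)^{(1)}\le 1$, and the condition relaxes as $c$ increases.

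For the final step, note that since $\aut(\widehat T)^{(1)}\ne 0$ and $T$ is smooth, irreducible, non-degenerate and not linearly normal, Theorem~\ref{t.FH} forces $T$ into case~(5): $T$ is a biregular projection, with nonzero prolongation, of the VMRT of an irreducible Hermitian symmetric space of compact type of rank $\ge 2$ or of a symplectic Grassmannian. Such varieties are enumerated explicitly in Section~4 of~\cite{FH1}; for each of them one reads off the codimension $c$ and the dimension of the first prolongation and checks the displayed numerical bound (equivalently, $\dim\aut(\widehat T)+\dim\aut(\widehat T)^{(1)}<\ell(\ell-1)/2$) outright. The main obstacle is exactly this case-by-case verification: one must invoke the exhaustiveness of the classification in~\cite{FH1} and treat each family, noting as a sanity check that—consistently with the linear-normality obstruction above—no entry of that list is a hypersurface or a quadric.
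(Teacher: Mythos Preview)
Your preliminary reductions are correct and pleasant: the observation that $T=p_L(S)$ is not linearly normal (hence $c\ge 2$, hence $T$ is not a quadric, hence Theorem~\ref{thm:auto}(a) gives a nontrivial bound on $\dim\aut(\widehat T)$) is valid and is a genuinely different way to bound $\dim\aut(\widehat T)$ than the paper's. The paper instead uses the inclusion $\aut(\widehat{p_L(S)})\subset\aut(\widehat S)$ family by family (so for Type~I it records $\dim\aut(\widehat T)<a^2+b^2$, etc.), rather than invoking Theorem~\ref{thm:auto}. Your codimension-based bound is arguably more uniform.

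However, the proposal is \emph{not a proof}: all of the actual content lies in the final paragraph, where you say ``one reads off $c$ and the dimension of the first prolongation and checks the displayed numerical bound \dots\ outright.'' That is exactly what the paper does in full---it runs through Types I, II, III and the symplectic Grassmannian case, extracting from Propositions~4.10--4.12 and 4.18--4.19 of \cite{FH1} the explicit lower bounds on $\ell$ (e.g.\ $\ell\ge 3(a+b)-9$, $\ell\ge 6r-11$, $\ell\ge 3r-3$, $\ell\ge 3(k+m)-3$) and the explicit upper bounds on $\dim\aut(\widehat T)^{(1)}$ (e.g.\ $(a-3)(b-3)$, $(r-5)(r-6)/2$, $(r-2)(r-3)/2$, $k(k+1)/2$), and then verifies the inequality arithmetically. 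You have not carried out any of this, so the proposal stops precisely where the work begins. Also, a minor point: your parenthetical ``equivalently'' is not accurate---the displayed bound $\dim\aut(\widehat T)^{(1)}\le\frac{(c-1)(c+4)}{2}-2$ is sufficient (given Theorem~\ref{thm:auto}) but not equivalent to the target inequality, since Theorem~\ref{thm:auto} may well overestimate $\dim\aut(\widehat T)$.
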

\begin{proof}
By \cite{FH1} (Section 4),  $S \subset \BP V$ is one of the followings: VMRT of IHSS of type (I), (II), (III) or VMRT of the symplectic Grassmanninans. We will do a case-by-case check based on the computations in \cite{FH1} (Section 4).

Consider the case of VMRT of IHSS of type (I).  Let $A, B$ be two vector spaces of dimension $a \geq 2$ (resp. $b \geq 2$). Then $S \simeq \BP A^* \times \BP B \subset \BP {\rm Hom}(A, B).$
For a linear subspace $L \subset  {\rm Hom}(A, B)$, we define ${\rm Ker}(L) = \cap_{\phi \in L} {\rm Ker}(\phi)$ and ${\rm Im}(L) \subset B$ the linear span of  $\cup_{\phi \in L} {\rm Im}(\phi)$.
By Proposition 4.10 \cite{FH1}, we have $\aut(\widehat{p_L(S)})^{(1)} \simeq {\rm Hom}(B/{\rm Im}(L), {\rm Ker}(L))$.  As $p_L$ is an isomorphism from $S$ to $p_L(S)$, $\BP L$ is disjoint from ${\rm Sec}(S)$, hence elements in $L$ have rank at least 3.  This implies that $\dim \aut(\widehat{p_L(S)})^{(1)} \leq (b-3)(a-3)$ and
$a, b \geq 4$.  As $\mathfrak{aut}(\widehat{p_L(S)}) \subset \aut(\hat{S}) = \mathfrak{gl}(A^*) \oplus \mathfrak{gl}(B)$, we obtain
$$
\dim \mathfrak{aut}(\widehat{p_L(S)}) + \dim \mathfrak{aut}(\widehat{p_L(S)})^{(1)}  < a^2+b^2+(b-3)(a-3).
$$
Note that $\ell=\dim {\rm Hom}(A, B) - \dim L \geq 3(a+b)-9$ by Proposition 4.10 \cite{FH1}.
Now it is straightforward to check that $a^2+b^2+(b-3)(a-3) < \ell (\ell -1)/2$ since $a+b \geq 8$.

Consider the case of VMRT of IHSS of type (II), then $S={\rm Gr}(2, W)$, where $W$ is a vector space of dimension $r \geq 6$. Let $L \subset \wedge^2 W$ be a linear subspace. We denote by ${\rm Im}(L)$ the linear span of all $\cup_{\phi \in L} {\rm Im}(\phi)$, where $\phi \in L$ is regarded as an element in ${\rm Hom}(W^*, W)$.  By Proposition 4.11 \cite{FH1}, we have $\aut(\widehat{p_L(S)})^{(1)} \simeq \wedge^2(W/{\rm Im}(L))^*$.   As $p_L$ is biregular on $S$, the rank of an element in $L$ is at least 5, hence $\dim \aut(\widehat{p_L(S)})^{(1)} \leq (r-5)(r-6)/2$. On the other hand, we have $\ell = \dim \wedge^2 W - \dim L \geq 6r -11$ by Proposition 4.11 \cite{FH1}.  As $\dim \mathfrak{aut}(\widehat{p_L(S)}) < r^2$, we check easily that $r^2+(r-5)(r-6)/2 < \ell (\ell -1)/2$.

Consider the case of VMRT of IHSS of type (III), then $S = \BP W$, where $W$ is a vector space of dimension $r \geq 4$.  By Proposition 4.12 \cite{FH1}, we have $\aut(\widehat{p_L(S)})^{(1)} \simeq {\rm Sym}^2(W/{\rm Im}(L))^*$, which has dimension at most $(r-2)(r-3)/2$.  On the other hand, we have $\ell \geq 3r-3$ by {\em loc. cit.}, hence we have $r^2+ (r-2)(r-3)/2 < \ell (\ell -1)/2$.

Now consider the VMRT of symplectic Grassmannians. We use the notations in Example \ref{e.SymGr} with  $V=(W \otimes Q)^* \oplus \SYM^2 W^*$, where $\dim W=k \geq 2$ and $\dim Q = m$. By Lemma 4.19 \cite{FH1}, we have $k \geq 3$, hence $k+m \geq 4$.
Let  $L \subset V$ be a linear subspace, then by Proposition 4.18 \cite{FH1}, we have  $\ell \geq 3(k+m)-3$ and  $\aut(\widehat{p_L(S)})^{(1)} \simeq \SYM^2(W/{\rm Im}_W(L))^*$ which has dimension at most $k(k+1)/2$.  As $\dim \aut(\hat{S}) = k^2+m^2+km$, we have
$$
\dim \mathfrak{aut}(\widehat{p_L(S)}) + \dim \mathfrak{aut}(\widehat{p_L(S)})^{(1)}  < k^2+m^2+km+k(k+1)/2 .
$$
Put $s=k+m$, then $k^2+m^2+km+k(k+1)/2 = s^2-km + k(k+1)/2 < 3s^2/2 - km < \ell(\ell-1)/2$ since $k+m \geq 4$.
\end{proof}

Now we can complete the proof of Theorem \ref{t.main}.

\begin{proof}[{Proof of Theorem \ref{t.main}}]
By Proposition \ref{p.prolong}, we have
$$
\dim \mathfrak{aut}(X) \leq n + \dim \mathfrak{aut}(\hat{\mathcal{C}_x})+\dim \mathfrak{aut}(\hat{\mathcal{C}_x})^{(1)}.
$$

If $\mathfrak{aut}(\hat{\mathcal{C}_x})^{(1)}=0$, then $\mathcal{C}_x \subset \BP^{n-1}$ is not a hyperquadric, which implies that
$\dim \mathfrak{aut}(\hat{\mathcal{C}_x}) \leq n(n-1)/2-2$  by Theorem \ref{thm:auto}. This gives that
$$
\dim \mathfrak{aut}(X) \leq n + \dim \mathfrak{aut}(\hat{\mathcal{C}_x}) \leq n + n(n-1)/2-2  < n(n+1)/2.
$$

Now assume $\mathfrak{aut}(\hat{\mathcal{C}_x})^{(1)} \neq 0$.  If $\mathcal{C}_x= \BP T_xX$ for general $x \in X$, then $X$ is isomorphic to $\BP^n$ by \cite{CMSB}.
If $\mathcal{C}_x \subset \BP T_xX$ is a hyperquadric $\Q^{n-2}$, then $X \simeq \Q^n$ by \cite{M}.
If $\mathcal{C}_x \subset \BP T_xX$ is isomorphic to the Segre embedding of $\BP^1 \times \BP^2$, then $X \simeq {\rm Gr}(2, 5)$ by \cite{M}.
Now assume $\mathcal{C}_x \subset \BP T_xX$ is not one of the previous varieties, then by Proposition \ref{p.main} we have
$\dim \mathfrak{aut}(X) \leq n  + n(n-1)/2 = n(n+1)/2$, which proves claim (a) in Theorem \ref{t.main}.

Now assume the equality $\dim \mathfrak{aut}(X) = n(n+1)/2$ holds, then by Proposition \ref{p.prolong}, the VMRT structure is locally flat.  By Proposition \ref{p.main}, $\mathcal{C}_x \subset \BP T_xX$ is either the second Veronese embedding of $\BP^2$ or a general hyperplane section of $\BP^1 \times \BP^2$. Note that these are also the VMRT of ${\rm Lag}(6)$ and a general hyperplane section of ${\rm Gr}(2,5)$, which have locally flat VMRT structure.  This implies that $X$ is isomorphic to ${\rm Lag}(6)$ or a general hyperplane section of ${\rm Gr}(2,5)$
respectively by the Cartan-Fubini extension theorem \cite{HM0}.
\end{proof}

\bibliographystyle{amsalpha}
\bibliography{references}

\bigskip
Baohua Fu (bhfu@math.ac.cn)

MCM, AMSS, Chinese Academy of Sciences, 55 ZhongGuanCun East Road, Beijing, 100190, China
and School of Mathematical Sciences, University of Chinese Academy of Sciences, Beijing, China

\bigskip
Wenhao OU (wenhaoou@math.ucla.edu)

UCLA Mathematics Department, 520 Portola Plaza, Los Angeles, CA 90095, USA

\bigskip
Junyi Xie (junyi.xie@univ-rennes1.fr)

IRMAR, Campus de Beaulieu, b\^atiments 22-23 263 avenue du G\'en\'eral Leclerc, CS 74205
35042 RENNES C\'edex, France

\end{document}